\DeclareMathOperator{\rr}{\mathbb R}
\DeclareMathOperator{\spt}{spt}
\DeclareMathOperator{\ba}{B}
\DeclareMathOperator{\s}{S}
\DeclareMathOperator{\diver}{div}
\newcommand{\R}{\mathbb{R}}
\newcommand{\si}{\Sigma}
\newcommand{\m}{{\bf M}}
\newcommand{\rt}{\mathbb{R}^{n+1}}
\newcommand{\de}{\mathcal{D}_}
\newcommand{\p}{\partial}
\newcommand{\mres}{\mathbin{\vrule height 1.6ex depth 0pt width
0.13ex\vrule height 0.13ex depth 0pt width 1.3ex}}
\newtheorem{theorem}{Theorem}
\newtheorem*{thmA}{Theorem A}
\newtheorem*{thmB}{Theorem B}
\newtheorem{lemma}{Lemma}
\newtheorem{proposition}{Proposition}
\theoremstyle{definition}\newtheorem{definition}{Definition}
\newtheorem{claim}{Claim}
\numberwithin{equation}{section}
\title[A two-piece property for free boundary minimal hypersurfaces]{A two-piece property for free boundary minimal hypersurfaces in the $(n+1)$-dimensional ball}
\author{Vanderson Lima and Ana Menezes}
\address{Instituto de Matem\'atica e Estat\'istica\\ Universidade Federal do Rio Grande do Sul\\ Brazil}
\email{vanderson.lima@ufrgs.br}
\address{Department of Mathematics\\ Princeton University\\ USA}
\email{amenezes@math.princeton.edu}
\begin{document}

\begin{abstract}
We prove that every hyperplane passing through the origin in $\rr^{n+1}$ divides an
embedded compact free boundary minimal hypersurface of the euclidean $(n+1)$-ball in
exactly two connected hypersurfaces. We also show that if a region in the $(n+1)$-ball has
mean convex boundary and contains a nullhomologous $(n-1)$-dimensional equatorial disk, then this
region is a closed halfball. Our first result gives evidence to a conjecture by Fraser
and Li in any dimension.
\end{abstract}

\maketitle

\section{Introduction}

Inspired by the work of Ros \cite{R} for closed minimal surfaces in $\s^3$, the authors proved in \cite{LM} the two-piece property for free boundary minimal surfaces in the unit ball of $\rr^3$. This result gives evidence to a conjecture by Fraser and Li \cite{FraserLi} concerning the first Steklov eigenvalue of free boundary minimal surfaces in $\ba^3.$ Also, Kusner and McGrath \cite{KM} used our result of two-piece property in the free boundary context to prove the uniqueness of the critical catenoid among embedded minimal annuli invariant under the antipodal map. This settles a case of another well-known conjecture of \cite{FraserLi} on the uniqueness of the critical catenoid.

 In the present paper we prove that the two-piece property holds in any dimension. More precisely, we prove the following.

\begin{thmA}[The two-piece property]
Every hyperplane in $\mathbb{R}^{n+1}$ passing through the origin divides an embedded compact free boundary minimal hypersurface of the unit $(n+1)$-ball $\ba^{n+1}$ in exactly two connected components.
\end{thmA}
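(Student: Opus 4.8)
The plan is to show that the equatorial hyperplane $P$ always meets $\Sigma$, so that $\Sigma\setminus P$ has at least two components, and then to rule out three or more. For the first point, after a rotation set $P=\{x_{n+1}=0\}$ and consider the coordinate function $u=x_{n+1}|_\Sigma$. Since $\Sigma$ is minimal, $u$ is harmonic on $\Sigma$; since $\Sigma$ meets $\partial\mathbb B^{n+1}$ orthogonally, the outward conormal along $\partial\Sigma$ is the position vector, so $\partial_\eta u=u$ on $\partial\Sigma$. If $\Sigma$ did not cross $P$ we could assume $u\ge 0$; the strong maximum principle would give $u>0$ in the interior unless $u\equiv 0$, and at a boundary zero of $u$ the Hopf lemma would force $\partial_\eta u<0$, contradicting $\partial_\eta u=u=0$. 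Hence either $u\equiv 0$, i.e. $\Sigma$ is the equatorial disk $D_P=P\cap\mathbb B^{n+1}$ (a degenerate case that is trivial), or $u$ changes sign and $\Sigma\cap P\ne\emptyset$. Applying the strong maximum principle to the two minimal hypersurfaces $\Sigma$ and $D_P$, whenever $\Sigma\ne D_P$ the set $\Sigma\cap P$ is a genuine separating hypersurface of $\Sigma$, so $\Sigma\setminus P$ has at least two components. I also record that, $\mathbb B^{n+1}$ being simply connected, $\Sigma$ is two-sided and separates the ball into two regions $\Omega_1,\Omega_2$.

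For the upper bound I would argue by contradiction and reduce matters to Theorem B. Suppose $\Sigma\setminus P$ had at least three components. Writing $B^{\pm}=\mathbb B^{n+1}\cap\{\pm x_{n+1}>0\}$, at least one closed half-ball, say $\overline{B^{+}}$, then contains two distinct components of $\Sigma\setminus P$. The idea is to assemble from the ``quadrant'' pieces $\Omega_i\cap B^{\pm}$ a compact region $W\subset\mathbb B^{n+1}$ to which Theorem B applies: each such piece is bounded by portions of $\Sigma$ (minimal, hence mean-convex from either side), by portions of the totally geodesic disk $D_P$ (minimal), and by a portion of $\partial\mathbb B^{n+1}$ (strictly convex, hence mean-convex with respect to the inner normal), while along the free boundary $\partial\Sigma$ and along $\partial D_P$ these faces meet $\partial\mathbb B^{n+1}$ orthogonally. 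Reflecting the upper quadrant across $P$ and gluing along $D_P$ produces a region $W$, symmetric across $P$ and containing the origin, whose boundary is mean-convex away from $P$ and which contains a nullhomologous $(n-1)$-dimensional equatorial disk through the origin.

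Applying Theorem B then forces $W$ to be a closed half-ball. Because $W$ is symmetric across $P$, this half-ball must be bounded by a hyperplane $P'$ orthogonal to $P$, and its flat face, being built from pieces of $\Sigma$, would then be totally geodesic; by unique continuation $\Sigma$ would coincide with the equatorial disk $D_{P'}$, which is cut by $P$ into exactly two half-disks, contradicting the assumption of three or more components. Therefore $\Sigma\setminus P$ has exactly two components.

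The main obstacle is the reflection step. Where $\Sigma$ is not orthogonal to $P$, the glued region $W$ is only piecewise smooth along the edge $\Sigma\cap P$, and its boundary is mean-convex across that edge only if the dihedral angles are correctly controlled; making precise the variational (viscosity) sense in which mean-convexity survives the doubling, and verifying that $W$ genuinely contains a nullhomologous equatorial disk so that the hypotheses of Theorem B are met, is the delicate heart of the argument. This is exactly the geometric input that Theorem B, an Alexandrov-type rigidity statement for mean-convex regions of the ball, is designed to supply.
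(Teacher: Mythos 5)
Your first step (that the hyperplane must meet $M$, via the Steklov relation $\partial_\eta u=u$ for $u=x_{n+1}|_M$) is fine, modulo the small case $u>0$ everywhere, which is excluded by $0=\int_M\Delta u=\int_{\partial M}\partial_\eta u=\int_{\partial M}u$. The problem is the second half. Your reduction to Theorem B rests on producing, from a ``quadrant'' $\Omega_i\cap B^{+}$ and its reflection, a region $W$ satisfying \emph{all} the hypotheses of Theorem B: smooth mean convex boundary meeting $\s^n$ orthogonally, and containment of a nullhomologous $(n-1)$-dimensional equatorial disk. You flag both verifications as ``the delicate heart of the argument,'' but they are not delicate details --- they are the entire content, and there are concrete reasons they fail. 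The doubled boundary has an edge along $\Sigma\cap P$ whose dihedral angle is governed by the (arbitrary) angle at which $M$ meets $P$, so mean convexity does not survive the doubling in any usable sense; and the set $\Omega_i\cap D_P$ is just some open subset of the $n$-disk, with no reason to contain a full $(n-1)$-dimensional equatorial disk (a great subdisk reaching $\s^n$ on both sides), let alone a nullhomologous one. The inside of the critical catenoid ($n=2$, $P$ the waist plane) illustrates both failures: it contains the vertical diameter, which is a $1$-dimensional equatorial disk, but that diameter is \emph{not} nullhomologous there, which is precisely why Theorem B does not force that region to be a half-ball.

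The decisive objection is that your construction never uses the hypothesis of three or more components. The quadrants $\Omega_i\cap B^{\pm}$ and their doubles exist just as well when $M\setminus P$ has exactly two components, so if your argument went through it would show that \emph{every} such $W$ is a half-ball and hence that every embedded free boundary minimal hypersurface is an equatorial disk --- which is false. So the approach needs an essential new mechanism by which the extra component creates a region satisfying Theorem B, and none is indicated. For contrast, the paper proceeds quite differently: it minimizes area in each component $W$, $W'$ of $\ba^{n+1}\setminus M$ among hypersurfaces with fixed boundary $\Gamma_I=\partial\overline{M_1}\cap D^n$ and free boundary on $\s^n$; stability plus the logarithmic cutoff argument of Lemma \ref{lem2} forces the minimizers to be flat pieces of $D^n$; the Constancy Theorem glues the two minimizers into all of $D^n$, so $M\cap D^n=\Gamma_I$; and then the nodal set analysis together with the Solomon--White maximum principle shows the putative second component $M_2$ would have empty fixed boundary, contradicting Proposition \ref{prop-simple}(i). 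It is this variational/constancy mechanism, not a reflection argument, that converts ``three components'' into a contradiction.
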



We also prove the following result which can be seen as a strong version of the analog of the result by Solomon \cite{Sol} in the free boundary context.

\begin{thmB}
Let $W\subset \ba^{n+1}$ be a connected closed region with mean convex boundary such that $\partial W$ meets $\s^n$ orthogonally along its boundary and $\partial W$ is smooth. Suppose $W$ contains a set of the form $P\cap \ba^{n+1}$ which is nullhomologous in $W$ (see Definition \ref{def-nul}), where $P$ is a $(n-1)$-dimensional plane in $\mathbb{R}^{n+1}$ passing through the origin. Then $W$ is a closed halfball.
\end{thmB}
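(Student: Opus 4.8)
The plan is to exploit the pencil of totally geodesic hypersurfaces through the axis $P$ as a minimal foliation, and to combine a divergence-free flux identity with the strong maximum principle, using the mean convexity of $\partial W$ and the orthogonality condition as one-sided barriers. First I would fix coordinates so that $P=\{x\in\rr^{n+1}:x_n=x_{n+1}=0\}$, so that $P^\perp=\mathrm{span}(e_n,e_{n+1})$ is a $2$-plane and $D:=P\cap\ba^{n+1}$ is the equatorial $(n-1)$-disk contained in $W$. For $\theta\in[0,\pi)$ I set $w(\theta)=\cos\theta\,e_n+\sin\theta\,e_{n+1}$ and $E_\theta=\{x\in\ba^{n+1}:\langle x,w(\theta)\rangle=0\}$. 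Each $E_\theta$ is a totally geodesic (hence minimal) $n$-disk containing $D$ and meeting $\s^n$ orthogonally, and the family $\{E_\theta\}$ foliates $\ba^{n+1}\setminus P$ with singular set the spine $D$. I then take $N$ to be the unit normal field of this foliation on $\ba^{n+1}\setminus P$: since the leaves are minimal one has $\div N=0$, and since they meet $\s^n$ orthogonally, $N$ is tangent to $\s^n$.

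Next I would record a flux identity. Write $\partial W=M\cup\Omega$, where $M=\overline{\partial W\cap \mathrm{int}\,\ba^{n+1}}$ is the mean-convex free-boundary hypersurface and $\Omega\subset\s^n$. Applying the divergence theorem to $N$ on $W$ minus a thin tube of radius $\varepsilon$ about $D$ and letting $\varepsilon\to 0$, three things happen: the interior integral vanishes since $\div N=0$; the contribution of $\Omega$ vanishes because $N$ is tangent to $\s^n$ while the outward conormal there is radial; and the tube contribution vanishes because $N$ is everywhere tangent to the tube about the spine. This yields $\int_M\langle N,\nu\rangle\,dA=0$, with $\nu$ the outward unit normal of $W$. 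Here the nullhomology hypothesis (Definition~\ref{def-nul}) is what guarantees that $D$ is genuinely enclosed, so that this accounting is valid and the orientation is consistent.

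The main step is to upgrade this identity to the exact shape via the maximum principle. I would rotate the leaves $E_\theta$ and examine an extremal contact with $M$: sliding $\theta$ and tracking the last point of $M$ on a fixed side produces a leaf $E_{\theta_0}$ that touches $M$ with $M$ lying locally on one side of it. At an interior contact point the leaf is minimal while $M$ is mean convex on the appropriate side, so Hopf's strong maximum principle forces $M$ to coincide with $E_{\theta_0}$ on an open set; at a contact point on $\s^n$, the fact that both $M$ and $E_{\theta_0}$ meet $\s^n$ orthogonally lets one run the boundary version after doubling across the sphere. Unique continuation then propagates the coincidence, and connectedness of $M$ gives $M=E_{\theta_0}\cap\ba^{n+1}$; since $D$ is nullhomologous it must lie in this flat face, whence $W$ is exactly the closed halfball bounded by $E_{\theta_0}$.

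The hard part will be precisely this maximum principle step at the free boundary and along the spine. Establishing the one-sided touching requires the nullhomology hypothesis to prevent $M$ from pinching off before the extremal leaf is reached, and applying the Hopf lemma where $M$ meets $\s^n$ orthogonally requires the reflection/doubling argument across the sphere, with care that mean convexity supplies the inequality on the geometrically correct side. The other delicate point is the singular spine $D$, where $N$ is undefined and the leaves all cross; it is exactly the tangency of $N$ to the tube about $D$ that rescues the flux computation, and one must check that the extremal contact does not occur only along $D$ itself.
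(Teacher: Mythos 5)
There is a genuine gap, and it sits exactly where you flag ``the hard part'': your argument never actually produces a leaf of the open book that is contained in $W$, so the sliding/extremal-contact step has no starting point. Every full equatorial disk $E_\theta$ contains the spine $D\subset W$, so every leaf meets $W$ and there is no well-defined ``last'' $\theta$; the correct objects to slide are the half-disk pages $H_\phi$ with binding $D$, and for the sliding argument one needs to know that at least one page is entirely contained in $W$ before one can rotate it to a final position and invoke the Hopf/Serrin maximum principle at an interior, free-boundary, or corner tangency. Your flux identity $\int_M\langle N,\nu\rangle\,dA=0$ is correct but is true for essentially any such $W$ (it never uses the nullhomology hypothesis, contrary to what you assert) and it is not used anywhere in your maximum-principle step, so it cannot supply the missing page. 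This is precisely the point where the paper does real work: it uses the nullhomology hypothesis to set up a partially free boundary minimization problem in the class of integer rectifiable currents with fixed boundary $\Upsilon$ and free boundary on $\s^n\cap W$ (nonempty because the bounding hypersurface $M$ with $\partial M=\Upsilon\cup\Gamma$ is admissible), obtains a stable minimizer $\Sigma\subset W$, and then proves via the stability inequality with the test function $\langle x,v\rangle$ (suitably cut off near the singular set, Lemma \ref{lem2}) that $\Sigma$ is totally geodesic, hence a flat half-disk page inside $W$. Only then does the rotation-to-last-contact argument, which is the part of your proposal that is essentially the same as the paper's, go through.

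A secondary but real issue: even granting a one-sided contact, the tangency can occur at a point of $\p\Upsilon$, where the page meets both the binding and $\s^n$; there neither the interior Hopf lemma nor the free-boundary reflection applies directly, and one needs Serrin's maximum principle at a corner (as the paper notes, citing the appendix of \cite{LM}). Your proposal only treats interior and free-boundary tangencies. So the skeleton of your final step matches the paper, but the central existence-and-flatness mechanism (GMT minimization plus the stability argument of Lemma \ref{lem2}) is missing and is not replaced by the divergence-free flux computation.
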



Let us remark that exactly as in the case $n=2$, Theorem A can be proved by assuming the conjecture by Fraser and Li \cite{FraserLi} on the the first Steklov eigenvalue of free boundary minimal hypersurfaces in $\ba^{n+1}$; hence,  Theorem A gives evidence to this conjecture (see \cite[Remark 2]{LM}).

The strategy to prove Theorem A and Theorem B is similar to the case $n=2$ and uses Geometric Measure Theory to analyze the minimizers of a partially free boundary problem for the area functional. However, in higher dimensions the situation is more delicate since the hypersurfaces obtained as minimizers can have a singular set  (see Theorem \ref{thm.reg1}).

Motivated mainly by the celebrated work of Fraser and Schoen \cite{F.S1,F.S2}, the study of free boundary minimal surfaces in $\ba^3$ saw a rapid development in the last few years, see for instance \cite{M.Li} and the references therein. However, the case of free boundary minimal hypersurfaces in $\ba^{n+1}$ is not so well-studied. Concerning examples, some free boundary minimal hypersurfaces with symmetry were constructed in \cite{FGM}, and a variational theory has been developed in \cite{LZ, SWZ, Wan}.

Regarding some properties of free boundary minimal hypersurfaces, we can mention that the asymptotic properties of the index of higher-dimensional free boundary minimal catenoids were studied in \cite{SSTZ}, and in \cite{ACS} it was proved that the index of a properly embedded free boundary minimal hypersurface in $\ba^{n+1}, 3\leq n+1\leq 7,$ grows linearly with the dimension of its first relative homology group. In \cite{Li} the first author proved the index can be controlled from above by a function of the $L^n$ norm of the second fundamental form. Also, compactness results for the space of free boundary minimal hypersurfaces were obtained in \cite{FraserLi,ACS2,GZ}.

\section{Preliminary}

\subsection{Free boundary minimal hypersurfaces}

Let $\ba^{n+1}\subset\rr^{n+1}$ be the unit ball of dimension $n+1$ with boundary $\partial\ba^{n+1}=\s^n$. Throughout this paper we will denote by $D^n$ the $n-$dimensional equatorial disk which is the intersection of $\ba^{n+1}$ with a hyperplane passing through the origin. In the following, $\mathcal{H}^s$ denotes the $s$-dimensional Hausdorff measure, where $s > 0$.

Let $\Sigma \subset \mathbb{R}^{n+1}$. Along this section we will use the following notation/assumptions: 

\begin{itemize}
\item $\overline{\si}$ is compact and it is contained in $\ba^{n+1}$.
\item $\si$ is an embedded orientable smooth hypersurface with boundary. 
\item The singular set $\mathcal{S}_{\si}$ is the complement of $\si$ in $\overline{\si}$. We suppose $\mathcal{H}^{n}(\mathcal{S}_{\si}) = 0$.
\item The boundary of $\si$ satisfies $\p\si=\Gamma_I\cup\Gamma_S$, where $\mathrm{int} (\Gamma_I)\subset \mathrm{int}(\ba^{n+1})$ and $\Gamma_S\subset \s^n.$ We have that, away from the singular set, $\p\si$ is an embedded smooth submanifold of dimension $n-1$.
\end{itemize}

\begin{definition}
Let $\Sigma$ be as above. We say that $\Sigma$ is a {\it minimal hypersurface with free boundary} if the mean curvature vector of $\Sigma$ vanishes and $\Sigma$ meets $\s^{n}$ orthogonally along $\p \si$ (in particular, $\Gamma_I=\emptyset).$ We say that $\Sigma$ is a {\it minimal hypersurface with partially free boundary} if the mean curvature vector of $\Sigma$ vanishes and its boundary $\Gamma_I\cup\Gamma_S$ satisfies that $\Gamma_I\neq\emptyset$ and $\Sigma$ meets $\s^n$ orthogonally along $\Gamma_S$.\end{definition}

From now on, given a (partially) free boundary minimal hypersurface $\Sigma\subset \ba^{n+1}$ with boundary $\p \si=\Gamma_I\cup \Gamma_S$, we will call $\Gamma_I$ its fixed boundary and $\Gamma_S$ its free boundary.

 \begin{definition}\label{def-stable}
Let $\Sigma$ be a partially free boundary minimal hypersurface in $\ba^{n+1}$. We say that $\Sigma$ is {\it stable} if for any function $f\in C^{\infty}(\Sigma)$ such that $f|_{\Gamma_I}\equiv 0$ and $\textrm{supp}(f)$ is away from the singular set $\overline{\si}\setminus\si$, we have
\begin{equation}\label{eq-stable}
-\int_{\Sigma} (f\Delta_{\Sigma}f+|A_\Sigma|^2f^2)\,d\mathcal{H}^n +\int_{\Gamma_S}\left(f\frac{\p f}{\p \nu}-f^2\right)d\mathcal{H}^{n-1} \geq 0,
\end{equation}
or equivalently
\begin{equation}\label{eq-stable2}
\int_{\Sigma} (|\nabla_\Sigma f|^2-|A_\Sigma|^2f^2)\,d\mathcal{H}^n -\int_{\Gamma_S}f^2 d\mathcal{H}^{n-1}\geq 0,
\end{equation}
where $\nu$ is the outward normal vector field to $\Gamma_S$. 
\end{definition}

Observe that if $\Sigma$ is stable then, by an approximation argument, the inequality \eqref{eq-stable2} holds for any function $f\in H^1(\Sigma)$ such that $f(p) = 0$ for a.e. $p \in \Gamma_I$ and $\textrm{supp}(f)$ is away from the singular set. In particular \eqref{eq-stable2} holds for any Lipschitz function satisfying the boundary condition.

\begin{lemma}\label{lem2}
Let $\Sigma$ be a partially free boundary minimal hypersurface in $\ba^{n+1}$ of finite area and such that the singular set $\mathcal{S}_\si=\overline{\si}\setminus \si$  satisfies $\mathcal{S}_{\si}=\mathcal{S}_0\cup\mathcal{S}_1$, where $\mathcal{S}_0 \subset\overline{\Gamma_I}$ and  $\mathcal{H}^{n-2}\big(\mathcal{S}_1\big) = 0$. If $\Gamma_I$ is contained in an n-dimensional equatorial disk, then $\Sigma$ is totally geodesic. 
\end{lemma}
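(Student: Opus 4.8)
The plan is to turn the stability inequality \eqref{eq-stable2} into a pointwise vanishing statement for $A_\Sigma$ by feeding it the one linear function that is automatically killed by the fixed boundary. Let $a\in\rr^{n+1}$ be a unit vector normal to the hyperplane that cuts out the equatorial disk containing $\Gamma_I$, and set $f=\langle x,a\rangle$ on $\Sigma$. Since $\Gamma_I$ lies in $a^{\perp}$ we have $f\equiv 0$ on $\Gamma_I$, so (after the cutoff described below) $f$ is an admissible variation for the partially free boundary problem. Because $\Sigma$ is minimal the coordinate functions are harmonic, so $\Delta_\Sigma f=0$; and along $\Gamma_S$ the orthogonality to $\s^n$ forces the outward conormal $\nu$ to agree with the position vector $x$, which gives the Robin-type identity $\partial f/\partial\nu=\langle a,x\rangle=f$ on $\Gamma_S$ (using that on $\Gamma_S$ the vector $x=\nu$ is tangent to $\Sigma$, so its normal component drops out). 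These two facts are the entire geometric input.

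With them in hand I would integrate by parts: since $f$ is harmonic and vanishes on $\Gamma_I$,
\[
\int_\Sigma|\nabla_\Sigma f|^2\,d\mathcal H^n=\int_{\p\Sigma} f\,\frac{\partial f}{\partial\nu}\,d\mathcal H^{n-1}=\int_{\Gamma_S} f^2\,d\mathcal H^{n-1}.
\]
Substituting this identity into \eqref{eq-stable2} makes the Dirichlet and free-boundary terms cancel exactly, leaving $-\int_\Sigma|A_\Sigma|^2 f^2\,d\mathcal H^n\ge 0$, hence $|A_\Sigma|^2 f^2\equiv 0$. Thus $A_\Sigma$ vanishes identically on the open set $\{f\neq 0\}$. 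If $f\equiv 0$ then $\Sigma\subset a^{\perp}$ is a piece of a hyperplane and is trivially totally geodesic; otherwise, since minimal hypersurfaces are real-analytic, $\{f\neq 0\}$ is dense and $|A_\Sigma|\equiv 0$ follows by continuity (arguing component by component if needed). In either case $\Sigma$ is totally geodesic. Note this genuinely uses that $\Sigma$ is stable: an unstable piece of catenoid with free boundary on $\s^n$ and fixed boundary cut out by a hyperplane through the origin satisfies all the remaining hypotheses without being totally geodesic.

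The hard part will not be this algebra but the fact that $f=\langle x,a\rangle$ is not supported away from $\mathcal S_\Sigma$, so neither the integration by parts nor \eqref{eq-stable2} applies to it directly; this is precisely what the hypotheses on the singular set are for. I would insert logarithmic cutoff functions $\eta_\varepsilon$ vanishing in a neighbourhood of $\mathcal S_1$ and run the two inequalities on $\eta_\varepsilon f$. The assumption $\mathcal H^{n-2}(\mathcal S_1)=0$ places $\mathcal S_1$ at codimension at least two in $\Sigma$, which is exactly the threshold guaranteeing that such cutoffs can be chosen with Dirichlet energy tending to $0$; every error term produced by $\nabla\eta_\varepsilon$ then vanishes as $\varepsilon\to 0$, and passing to the limit recovers the boxed identity and the stability inequality for $f$ itself. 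The remaining piece $\mathcal S_0\subset\overline{\Gamma_I}$ is harmless because $f$ already vanishes on $\Gamma_I$, so the integrals near $\mathcal S_0$ are controlled by the Dirichlet condition rather than by a capacity estimate. Once this approximation step is justified, the geometric core of the proof is the single observation that the equatorial-disk test function reduces stability to $\int_\Sigma|A_\Sigma|^2 f^2\le 0$.
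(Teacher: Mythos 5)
Your proposal is correct and follows essentially the same route as the paper: the test function $f=\langle x,a\rangle$, the identities $\Delta_\Sigma f=0$ and $\partial f/\partial\nu=f$ on $\Gamma_S$, the cancellation in the stability inequality leaving $\int_\Sigma|A_\Sigma|^2f^2=0$, and the same endgame via unique continuation. The paper implements your cutoff step concretely by combining a cutoff at the level sets $\{|f|<\epsilon\}$ (which absorbs $\mathcal S_0\subset\overline{\Gamma_I}$) with finite ball coverings of $\mathcal S_1\cap\{|f|\ge\epsilon/2\}$ satisfying $\sum r_i^{n-2}<\epsilon'$, using the monotonicity formula to get the area bound $\mathcal H^n(B_{2r_i}\cap\Sigma)\le C r_i^n$ that makes the capacity estimate close --- the same idea as your logarithmic cutoffs.
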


\begin{proof}
Let $\Sigma$ be as in the hypotheses and denote by $D^n$ the equatorial disk that contains $\overline{\Gamma_I}$. Let $v\in \s^n$ be a vector orthogonal to the disk $D^n$ and consider the function $f(x)=\langle x,v\rangle$, $x \in \overline{\Sigma}.$ By hypothesis, we know that $f|_{\Gamma_I}\equiv 0$. A standard calculation using that $\si$ is minimal and free boundary yields 
$$\Delta_{\si}f = 0, \quad \frac{\partial f}{\partial\nu} = f.$$

Fix $\epsilon >0$ and consider a smooth function $\eta_\epsilon:[-1,1]\to [0,1]$ so that
\begin{itemize}
\item $\eta_\epsilon(s)=0$ for $|s|<\epsilon$,
\item $\eta_\epsilon(s)=1$ for $|s|>2\epsilon$,
\item $|\eta^{\prime}_\epsilon|<\displaystyle\frac{C}{\epsilon}$, for some constant $C > 0$.
\end{itemize}
Define $\phi_{0,\epsilon}:\overline{\Sigma} \to [0,1]$ as $\phi_{0,\epsilon}(x)=\eta_\epsilon(f(x))$. In particular, we have $|\nabla_\Sigma\phi_{0,\epsilon}|<C/\epsilon$ in $\Sigma.$  Observe that the set $S\subset \mathcal{S}_1$ where $\phi_{0,\epsilon}$ is not smooth satisfies $\mathcal{H}^{n-2}(S)=0.$ 

Since $\mathcal{S}_1\cap\{|f(x)|\geq\frac{\epsilon}{2}\}$ is compact and $\mathcal{H}^{n-2}\big(\mathcal{S}_1\big) = 0$, for any $\epsilon' > 0$ there exist balls $B_{r_i}(p_i) \subset \mathbb{R}^{n+1},\, i=1,\cdots,m,$ such that
$$\mathcal{S}_1\cap\left\{|f(x)|\geq\frac{\epsilon}{2}\right\} \subset \bigcup_{i=1}^{m} B_{r_i}(p_i), \quad \sum_{i=1}^{m} r_{i}^{n-2} < \epsilon',\ i=1,\cdots,m.$$
For each $i=1,\cdots,m$, consider a smooth function $\phi_{i}:\overline{\Sigma} \to [0,1]$ such that
\begin{itemize}
\item $\phi_{i}(s)=0$ in $B_{r_i}(p_i)$,
\item $\phi_{i}(s)=1$ in $\mathbb{R}^{n+1}\setminus B_{2r_i}(p_i)$,
\item $|\nabla_{\si}\phi_{i}|<\displaystyle\frac{2}{r_i},\, \forall\, x \in \si$. 
\end{itemize}

Define $\phi_{\epsilon}, f_\epsilon:\overline{\Sigma} \to [0,1]$ by $\phi_{\epsilon}(x) = \displaystyle\min_{0\leq i \leq m} \phi_{i}$, where $\phi_0=\phi_{0,\epsilon},$ and $f_\epsilon=\phi_\epsilon f$. We have that $f_{\epsilon}$ is Lipschitz  and $f_\epsilon|_{\Gamma_I}\equiv 0$, hence (\ref{eq-stable2}) holds. Moreover
$$|\nabla_\Sigma f_\epsilon|^2=\phi_\epsilon^2|\nabla_\Sigma f|^2+2f\phi_\epsilon\langle \nabla_\Sigma f, \nabla_\Sigma \phi_\epsilon\rangle + f^2|\nabla_\Sigma \phi_\epsilon|^2$$
and
$$\begin{array}{rcl}
\displaystyle\int_\Sigma \phi^2_\epsilon|\nabla_\Sigma f|^2d\mathcal{H}^n &=& \displaystyle-\int_\Sigma f\phi_{\epsilon}^2\Delta_\Sigma f d\mathcal{H}^n-\int_\Sigma 2f\phi_\epsilon\langle\nabla_\Sigma \phi_\epsilon, \nabla_\Sigma f\rangle d\mathcal{H}^n+\int_{\p \Sigma}\phi_\epsilon^2 f\frac{\p f}{\p\nu}d\mathcal{H}^{n-1}\\\\
\displaystyle&=&\displaystyle -\int_\Sigma 2f\phi_\epsilon\langle\nabla_\Sigma \phi_\epsilon, \nabla_\Sigma f\rangle d\mathcal{H}^n+\int_{\Gamma_S}\phi_\epsilon^2 f^2d\mathcal{H}^{n-1},
\end{array}$$
since $\Delta_\Sigma f\equiv 0$, $\frac{\p f}{\p \nu}=f$ and $f|_{\Gamma_I}\equiv 0.$ Hence, applying it to (\ref{eq-stable2}), we get
\begin{equation}\label{eq4}
\int_\Sigma  (f^2|\nabla_\Sigma \phi_\epsilon|^2 -|A_\Sigma|^2\phi_\epsilon^2 f^2)\,d\mathcal{H}^n \geq0.
\end{equation}

On the other hand, along $\overline{\si}$ we have $f^2 \leq 1$. Since $f$ has support away from the singular set, by the classical monotonicity formula at the interior and at the free boundary, there is $C_{\si, \epsilon} > 0$  such that
$$\mathcal{H}^n\bigl(B_{2r_i}(p_i)\cap\Sigma\bigr) \leq C_{\si,\epsilon}\,r_i^{n}.$$

Thus

\begin{align*}
\int_\Sigma  f^2|\nabla_\Sigma \phi_\epsilon|^2d\mathcal{H}^n &\leq\sum_{i=0}^{m}\int_\Sigma  f^2|\nabla_\Sigma \phi_{i,\epsilon}|^2d\mathcal{H}^n\\
&= \int_\Sigma  f^2|\nabla_\Sigma \phi_{0,\epsilon}|^2d\mathcal{H}^n + \sum_{i=1}^{m}\int_{\big(B_{2r_i}(p_i)\setminus B_{r_i}(p_i)\big)\cap\Sigma}  f^2|\nabla_\Sigma \phi_{i,\epsilon}|^2d\mathcal{H}^n\\
&\leq 4C\mathcal{H}^n\bigl(\Sigma\cap\{|f|^{-1}(\epsilon, 2\epsilon)\}\bigr) + \sum_{i=1}^m\frac{4}{r_{i}^{2}}\,\mathcal{H}^n\bigl(B_{2r_i}(p_i)\cap\Sigma\bigr)\\
&\leq 4C\mathcal{H}^n\bigl(\Sigma\cap\{|f|^{-1}(\epsilon, 2\epsilon)\}\bigr) + C'_{\si,\epsilon}\sum_{i=1}^{m} r_{i}^{n-2}\\
&\leq 4C\mathcal{H}^n\bigl(\Sigma\cap\{|f|^{-1}(\epsilon, 2\epsilon)\}\bigr) + C'_{\si, \epsilon}\,\epsilon'.
\end{align*}

If we let $\epsilon'\to0$ first and then $\epsilon\to0$ we obtain
$$
\int_\Sigma|A_\Sigma|^2 f^2d\mathcal{H}^n=0.
$$
If $|A_\Sigma|\equiv0$ then $\Sigma$ is totally geodesic and we are done. If $|A_\Sigma|(x)>0$ for some $x\in\Sigma,$ then we can find a neighborhood $U$ of $x$ in $\Sigma$ such that $|A_{\Sigma}|$ is strictly positive. This implies $\langle y, v\rangle=0$ for any $y\in U$. Therefore, $\Sigma$ is entirely contained in the disk $D^n;$ in particular, it is totally geodesic.
\end{proof}


An equatorial disk $D^{n}$ divides the ball $\ba^{n+1}$ into two (open) halfballs. We will denote these two halfballs by $\ba^+$ and $\ba^-,$ and we have $\ba^{n+1}\setminus D^n = \ba^+\cup \ba^-.$

In the next proposition we will summarize some facts about partially free boundary minimal surfaces in $\ba^{n+1}$ which we will use in the proof of Theorem \ref{thm-main}.

\begin{proposition}
\begin{enumerate}
\item[(i)] Let $D^n$ be an equatorial disk and let $\Sigma$ be a smooth partially free boundary minimal hypersurface in $\ba^{n+1}$ contained in one of the closed halfballs determined by $D^n$, say $\overline{\ba^+},$ and such that $\partial \Sigma\subset \partial \overline{\ba^+}$. If $\Sigma$ is not contained in an equatorial disk, then $\Sigma$ has necessarily nonempty fixed boundary and nonempty free boundary.

\item[(ii)] The only smooth (partially) free boundary minimal hypersurface that contains a $(n-1)$-dimensional piece of the free boundary of a $n-$dimensional equatorial disk is (contained in) this equatorial disk itself.
\end{enumerate}
\label{prop-simple}
\end{proposition}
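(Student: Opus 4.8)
The plan is to handle both parts with the single linear function $f(x)=\langle x,v\rangle$, where $v$ is a unit vector orthogonal to the hyperplane $H$ spanned by the equatorial disk $D^n$. Since $\Sigma$ is minimal, the restriction $f|_\Sigma$ is harmonic, $\Delta_\Sigma f=0$; and since $\Sigma$ meets $\s^n$ orthogonally along its free boundary $\Gamma_S$, the outward conormal $\nu$ agrees with the position vector there, so that $\partial f/\partial\nu=f$ on $\Gamma_S$. These are exactly the two identities already established in the proof of Lemma \ref{lem2}. Everything then reduces to locating the nodal set $\{f=0\}=H$.

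For part (i) I would argue by contrapositive: assuming $\Gamma_I=\emptyset$ or $\Gamma_S=\emptyset$, I show $\Sigma\subset D^n$. Orient $v$ so that it points into $\ba^+$; then $\Sigma\subset\overline{\ba^+}$ forces $f\geq 0$ on $\Sigma$. The key claim is that $f$ vanishes on all of $\partial\Sigma$. If $\Gamma_S=\emptyset$, then $\partial\Sigma=\Gamma_I$; since $\mathrm{int}(\Gamma_I)\subset\mathrm{int}(\ba^{n+1})$ while $\partial\Sigma\subset\partial\overline{\ba^+}$, and the only part of $\partial\overline{\ba^+}$ interior to the ball is the open disk $D^n\subset H$, I get $\overline{\Gamma_I}\subset D^n\subset H$, i.e. $f|_{\partial\Sigma}=0$. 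If instead $\Gamma_I=\emptyset$, then $\partial\Sigma=\Gamma_S$ and integrating $\Delta_\Sigma f=0$ gives
$$0=\int_\Sigma \Delta_\Sigma f\,d\mathcal H^n=\int_{\partial\Sigma}\frac{\partial f}{\partial\nu}\,d\mathcal H^{n-1}=\int_{\Gamma_S} f\,d\mathcal H^{n-1},$$
so the nonnegative function $f$ vanishes on $\Gamma_S=\partial\Sigma$. In either case $f$ is harmonic, nonnegative, and zero on $\partial\Sigma$, so the maximum principle yields $f\equiv0$; hence $\Sigma\subset H\cap\ba^{n+1}=D^n$, contradicting the hypothesis and proving (i).

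For part (ii), let $\gamma\subset\Sigma$ be the given $(n-1)$-dimensional piece of the free boundary $\partial D^n=D^n\cap\s^n$; it is a relatively open subset of the free boundary $\Gamma_S$ of $\Sigma$. Choosing $v\perp H$ as above, I have $f|_\gamma\equiv0$ because $\gamma\subset H$, and the free boundary condition gives $\partial f/\partial\nu=f=0$ along $\gamma$ as well. Thus the harmonic function $f$ has vanishing Cauchy data on the $(n-1)$-dimensional boundary portion $\gamma$. I would then invoke unique continuation: enlarging $\Sigma$ to a manifold without boundary past $\gamma$ and extending $f$ by $0$ across $\gamma$ produces a $C^1$ function whose distributional Laplacian has no jump term along $\gamma$ (both $f$ and $\partial_\nu f$ match), hence a weakly harmonic and therefore smooth function that vanishes on an open set; Aronszajn's theorem then forces $f\equiv0$ on the component of $\Sigma$ containing $\gamma$. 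Therefore $\Sigma\subset\{f=0\}\cap\ba^{n+1}=D^n$.

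The routine computations ($\Delta_\Sigma f=0$, $\partial f/\partial\nu=f$, the maximum principle) are exactly as in Lemma \ref{lem2}; the one step needing care is the unique continuation in part (ii). The main obstacle there is to pass rigorously from ``zero Cauchy data on the boundary piece $\gamma$'' to ``$f\equiv0$'': one must verify that the zero-extension across $\gamma$ is genuinely weakly harmonic, so that strong unique continuation applies, and then use connectedness of $\Sigma$ to propagate $f\equiv0$ from a neighborhood of $\gamma$ to the whole hypersurface.
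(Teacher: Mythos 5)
Your proof is correct. Part (ii) is essentially the paper's own argument: the authors also extend $\Sigma$ across the boundary piece $\Upsilon$, extend $x_{n+1}$ by zero, verify via integration by parts on a two-sided test domain that the matched Cauchy data ($x_{n+1}=\partial_\nu x_{n+1}=0$ on $\Upsilon$) kill the jump terms so the extension is weakly harmonic, and then invoke elliptic regularity and unique continuation from the open set where it vanishes. Part (i) is where you genuinely diverge. The paper runs two sweeping arguments: for $\Gamma_S=\emptyset$ it slides hyperplanes parallel to $D^n$ (morally the same as your scalar maximum principle for $f=\langle x,v\rangle$), and for $\Gamma_I=\emptyset$ it rotates equatorial disks about an $(n-1)$-dimensional axis and applies the interior/free-boundary maximum principle for hypersurfaces, concluding $\Sigma$ is itself an equatorial disk. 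You instead exploit the Robin condition $\partial f/\partial\nu=f$ on $\Gamma_S$: integrating $\Delta_\Sigma f=0$ gives $\int_{\Gamma_S}f\,d\mathcal H^{n-1}=0$, and the sign condition $f\ge 0$ forced by $\Sigma\subset\overline{\ba^+}$ then pins $f=0$ on all of $\partial\Sigma$, after which the scalar maximum principle gives $f\equiv 0$ and $\Sigma\subset D^n$. This unifies both cases around the single harmonic function already used in Lemma \ref{lem2}, avoids the free-boundary version of the geometric maximum principle for the rotating family, and still yields the contrapositive (it even locates $\Sigma$ in the specific disk $D^n$ rather than merely in some equatorial disk). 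The only points worth making explicit are the orientation check that the outward conormal along $\Gamma_S$ is the position vector (so the Robin identity has the right sign in the divergence theorem) and, in part (ii), the connectedness of $\Sigma$ needed to propagate $f\equiv 0$ from a neighborhood of $\gamma$ to all of $\Sigma$ -- the same implicit step the paper takes.
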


\begin{proof}
${\it (i)}$  If the free boundary were empty, we could apply the (interior) maximum principle with the family of hyperplanes parallel to the disk $D^n$ and conclude that $\Sigma$ should be contained in the disk $D^n$. On the other hand, if the fixed boundary were empty, then we would have a minimal hypersurface entirely contained in a halfball without fixed boundary; hence, we could apply the (interior or free boundary version of) maximum principle with the family of equatorial disks that are rotations of $D^n$ around a $(n-1)-$dimensional equatorial disk and conclude that $\Sigma$ should be an equatorial disk.

${\it (ii)}$ Let $D^n$ be an equatorial disk and suppose that $\Sigma$ is a (partially) free boundary minimal hypersurface such that $\Sigma\cap D^n$ contains a $(n-1)-$dimensional piece $\Upsilon$ of the free boundary of $D^n$ in $\s^n.$  Assume, without loss of generality, $D^n\subset \{x_{n+1}=0\}$. 

Observe that since $\Sigma$ is free boundary we know that $\frac{\partial x_{n+1}}{\partial \eta}{\big |}_{\Upsilon}=x_{n+1}{\big |}_{\Upsilon}=0,$ where $\eta$ is the conormal vector to $\Upsilon;$ and since $\Sigma$ is a minimal hypersurface in $\rr^{n+1}$ we have that $x_{n+1}{\big |}_\Sigma$ is harmonic.

We will show that $x_{n+1}{\big |}_\Sigma\equiv 0.$

Consider an extension $\hat{\Sigma}$ of $\Sigma$ along $\Upsilon$ such that $\Upsilon\subset \mbox{int}(\hat{\Sigma})$ and define $\hat{x}_{n+1}$ on $\hat\Sigma$ as
$$
\Big\{ 
\begin{array}{ll}
\hat{x}_{n+1}=x_{n+1}& \ \mbox{on} \ \ \Sigma\\
\hat{x}_{n+1}=0& \ \mbox{on} \ \ \hat{\Sigma}\setminus \Sigma
\end{array}
$$

Observe that $\hat{x}_{n+1}{\big |}_\Upsilon=x_{n+1}{\big |}_\Upsilon\equiv0$, $\frac{\partial\hat{x}_{n+1}}{\partial\hat\eta}{\big |}_\Upsilon=0$ and $\frac{\partial\hat{x}_{n+1}}{\partial\eta}{\big |}_\Upsilon=\frac{\partial {x}_{n+1}}{\partial\eta}{\big |}_\Upsilon=0,$ where $\hat\eta$ is the conormal to $\Upsilon$ pointing towards $\Sigma$ and $\eta$ is the conormal to $\Upsilon$ pointing towards $\hat\Sigma\setminus\Sigma;$ hence, $\hat{x}_{n+1}$ is $C^1$ in a neighborhood of $\Upsilon$ in $\hat\Sigma.$

\begin{claim}
$\hat{x}_{n+1}$ is a weak solution to the Laplacian equation $\Delta u=0.$
\end{claim}

Observe that $\hat{x}_{n+1}$ is a harmonic function on $\hat\Sigma\setminus\Upsilon$, so we just need to show the claim in a neighborhood of $\Upsilon.$

Consider a domain $\Omega=\Omega_1\cup\Omega_2$ where $\partial\overline{\Omega_i}=\Gamma_i\cup(\overline\Omega\cap\Upsilon)$ with $\Omega_1\subset\hat\Sigma\setminus\Sigma$ and $\Omega_2\subset\Sigma$ (see Figure \ref{fig_domain1}), and let $\phi:\hat\Sigma\to\rr$ be a smooth function with compact support contained in $\Omega.$ 

\begin{figure}[!h]
\includegraphics[scale=0.9]{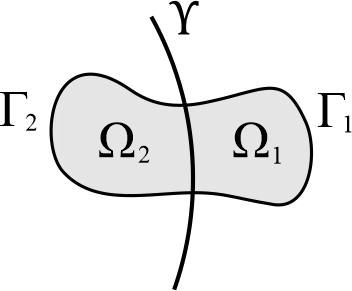}
\caption{$\Omega=\Omega_1\cup\Omega_2$.}
\label{fig_domain1}
\end{figure}

Integration by parts gives us
$$\begin{array}{rcl}
\displaystyle\int_\Omega\langle\nabla\phi, \nabla\hat{x}_{n+1}\rangle {\rm d}\sigma&=&\displaystyle-\int_\Omega\hat{x}_{n+1}\Delta\phi \, {\rm d}\sigma+\int_{\partial\Omega}\hat{x}_{n+1}\langle\nabla\phi, \nu\rangle{\rm d}L\\
&=&\displaystyle-\int_\Omega\hat{x}_{n+1}\Delta\phi \, {\rm d}\sigma,
\end{array}$$
since supp$(\phi)\subset \subset \Omega,$ where $\nu$ is the outward conormal to $\partial\Omega.$

Then,
$$\begin{array}{rcl}
-\displaystyle\int_\Omega\hat{x}_{n+1}\Delta\phi \,{\rm d}\sigma&=&\displaystyle\int_\Omega\langle\nabla\phi, \nabla\hat{x}_{n+1}\rangle {\rm d}\sigma\\
&=&\displaystyle\int_{\Omega_1}\langle\nabla\phi, \nabla\hat{x}_{n+1}\rangle {\rm d}\sigma+\displaystyle\int_{\Omega_2}\langle\nabla\phi, \nabla\hat{x}_{n+1}\rangle {\rm d}\sigma.\\
\end{array}$$

We have
$$\begin{array}{rcl}
\displaystyle\int_{\Omega_1}\langle\nabla\phi, \nabla\hat{x}_{n+1}\rangle {\rm d}\sigma&=& -\displaystyle\int_{\Omega_1}\phi\Delta\hat{x}_{n+1}{\rm d}\sigma+\displaystyle\int_{\partial\Omega_1}\phi\langle\nabla \hat{x}_{n+1}, \nu_1\rangle{\rm d}L\\
&=&\displaystyle\int_{\Upsilon}\phi\langle\nabla \hat{x}_{n+1}, \nu_1\rangle{\rm d}L\\
&=&0,
\end{array}$$
where in the first equality we used that $\hat{x}_{n+1}{\big |}_{\Omega_1}\equiv 0$ and in the second equality we used the fact that $\frac{\partial\hat{x}_{n+1}}{\partial\nu_1}{\big |}_\Upsilon=0,$ where $\nu_1$ is the outward conomal to $\Upsilon$ with respect to $\Omega_1.$

Analogously, we have
$$\begin{array}{rcl}
\displaystyle\int_{\Omega_2}\langle\nabla\phi, \nabla\hat{x}_{n+1}\rangle {\rm d}\sigma&=& -\displaystyle\int_{\Omega_2}\phi\Delta\hat{x}_{n+1}{\rm d}\sigma+\displaystyle\int_{\partial\Omega_2}\phi\langle\nabla \hat{x}_{n+1}, \nu_1\rangle{\rm d}L\\
&=&\displaystyle\int_{\Upsilon}\phi\langle\nabla \hat{x}_{n+1}, \nu_2\rangle{\rm d}L\\
&=&0,
\end{array}$$
where in the first equality we used that $\hat{x}_{n+1}{\big |}_{\Omega_2}=x_{n+1}{\big |}_{\Omega_2}$ is harmonic and in the second equality we used the fact that $\frac{\partial\hat{x}_{n+1}}{\partial\nu_2}{\big |}_\Upsilon=0,$ where $\nu_2$ is the outward conomal to $\Upsilon$ with respect to $\Omega_2.$

Therefore, the claim follows and, by the Elliptic theory, $\hat{x}_{n+1}$ has to be a (strong) solution to the Laplacian equation. Moreover, since $\hat{x}_{n+1}$ vanishes on an open set, the unique continuation result implies that $\hat{x}_{n+1}\equiv 0$ on $\hat\Sigma,$ that is, $\Sigma$ is (contained in) the equatorial disk $D^n.$
\end{proof}

\subsection{Integer rectifiable varifolds}

A set $M \subset \R^{n+1}$ is called countably $k$-rectifiable if $M$ is $\mathcal{H}^k$-measurable and if
$$M \subset \bigcup_{j = 0}^{\infty} M_{j},$$
where $\mathcal{H}^k(M_0) = 0$ and for $j \geq 1$, $M_j$ is an $k$-dimensional $C^1$-submanifold of $\R^{n+1}$. Such $M$ possesses $\mathcal{H}^k$-a.e. an approximate tangent space $T_{x}M$.

Let $G(n+1,k)$ be the Grassmannian of $k$-hyperplanes in $\mathbb{R}^{n + 1}$. An integer multiplicity rectifiable $k$-varifold $\mathcal{V} = v(M,\theta)$ is a Radon measure on $U\times G(n+1,k)$, defined by
$$\mathcal{V}(f) = \int_{M}f(x,T_x M)\,\theta(x)\,d\mathcal{H}^k, \ f \in C_{0}^{c}\big(U\times G(n+1,k)\big),$$
where $M \subset U$ is countably $k$-rectifiable and $\theta > 0$ is a locally $\mathcal{H}^k$-integrable integer valued function. Also, we say $\mathcal{V} = v(M,\theta)$ is {\it stationary} if
\begin{equation}\label{FVF}
\int_{M}\big(\diver_{M} \zeta\big)\theta\,d\mathcal{H}^k = 0,
\end{equation} 
for any $C^1$-vector field $\zeta$ of compact support. 

Then we have the following result, see \cite{I}.

\begin{lemma}\label{lem:char.stat}
Let $\si \subset \mathbb{R}^{n+1}$ be an embedded $C^1$-hypersurface such that $\mathcal{H}^{n-1}\big((\overline{\si}\setminus\si)\cap U\big) = 0$, for every open set $U \subset \mathbb{R}^{n+1}$ with compact closure. Let $\theta > 0$ be a integer valued function which is locally constant. Then, the following conditions are equivalent:
\begin{enumerate}
\item $\mathcal{V} = v(\si,\theta)$ is stationary.
\item $\vec{H}_{\si} = 0$, and there is $C_{\si} > 0$ such that for any ball $B_r(p) \subset \mathbb{R}^{n+1}$ we have 
$$\mathcal{H}^n\bigl(B_{r}(p)\cap\Sigma\bigr) \leq C_{\si}\,r^{n}.$$
\end{enumerate}
\end{lemma}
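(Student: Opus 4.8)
The plan is to prove the two implications separately, writing $\mathcal{S}=\overline{\si}\setminus\si$ for the singular set; by hypothesis $\mathcal{H}^{n-1}(\mathcal{S}\cap U)=0$ for every $U$ with compact closure, so $\si$ is a boundaryless $C^1$-manifold on which $\theta$ is locally constant. For the implication $(1)\Rightarrow(2)$, I would test the stationarity identity \eqref{FVF} with vector fields $\zeta$ compactly supported in the regular part $\si$, where $\theta$ is constant on each component. There the classical first variation formula reads $\int_\si\diver_\si\zeta\,d\mathcal{H}^n=-\int_\si\langle\vec H_\si,\zeta\rangle\,d\mathcal{H}^n$ with no boundary term, since $\si$ is boundaryless; hence stationarity forces $\vec H_\si=0$. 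The mass bound then follows from the monotonicity formula for stationary varifolds: the density ratio $r\mapsto\|\mathcal V\|(B_r(p))/r^n$ is non-decreasing, hence bounded on the range of radii meeting $\overline{\si}$, giving $\mathcal{H}^n(B_r(p)\cap\si)\le C_\si\,r^n$.

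The substance of the lemma is the reverse implication $(2)\Rightarrow(1)$. Fix a $C^1$ vector field $\zeta$ of compact support; the goal is to show $\int_\si(\diver_\si\zeta)\,\theta\,d\mathcal{H}^n=0$. I would excise a neighborhood of $\mathcal{S}$ by a cutoff $\phi_\epsilon\colon\overline{\si}\to[0,1]$ vanishing near $\mathcal{S}$ and equal to $1$ away from it. Since $\mathcal{H}^{n-1}(\mathcal{S})=0$, the compact portion of $\mathcal{S}$ meeting $\mathrm{supp}(\zeta)$ admits, for every $\epsilon'>0$, a finite cover by balls $B_{r_i}(p_i)$ with $\sum_i r_i^{n-1}<\epsilon'$, from whose bump functions $\phi_\epsilon$ can be assembled so that $|\nabla_\si\phi_\epsilon|\le C/r_i$ on $B_{2r_i}(p_i)\setminus B_{r_i}(p_i)$. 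Splitting $\diver_\si(\phi_\epsilon\zeta)=\phi_\epsilon\diver_\si\zeta+\langle\nabla_\si\phi_\epsilon,\zeta\rangle$, the first term integrates to zero: $\phi_\epsilon\zeta$ is compactly supported in the boundaryless regular part, so the divergence theorem on each component (using $\vec H_\si=0$) gives $\int_\si\diver_\si(\phi_\epsilon\zeta)\,\theta\,d\mathcal{H}^n=0$ after summing against the locally constant $\theta$.

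It remains to control the error $\int_\si\langle\nabla_\si\phi_\epsilon,\zeta\rangle\,\theta\,d\mathcal{H}^n$, and this is exactly where the mass bound is indispensable. Using $\mathcal{H}^n(B_{2r_i}(p_i)\cap\si)\le C_\si\,r_i^n$ one estimates
$$\int_\si|\nabla_\si\phi_\epsilon|\,d\mathcal{H}^n\le\sum_i\frac{C}{r_i}\,\mathcal{H}^n\!\big(B_{2r_i}(p_i)\cap\si\big)\le C'\sum_i r_i^{n-1}<C'\epsilon',$$
so the error tends to $0$ as $\epsilon'\to0$. Passing to the limit $\phi_\epsilon\to1$—dominated convergence being justified by the same mass bound, which makes $\theta\,\diver_\si\zeta$ integrable on $\mathrm{supp}(\zeta)$—yields $\int_\si(\diver_\si\zeta)\,\theta\,d\mathcal{H}^n=0$, i.e. stationarity. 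I expect this final estimate to be the crux: the argument closes only because $\mathcal{H}^{n-1}$-negligibility of $\mathcal{S}$ supplies a cover with $\sum_i r_i^{n-1}$ arbitrarily small, while the $n$-dimensional density bound absorbs each factor $1/r_i$ from the gradient into an extra power $r_i$, so that the two hypotheses of condition $(2)$ conspire precisely to annihilate the boundary error.
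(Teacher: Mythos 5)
The paper gives no proof of this lemma at all: it is stated with the pointer ``see \cite{I}'' to Ilmanen, so there is no in-text argument to compare against. Your proposal reconstructs, correctly, the standard argument behind that citation. The forward direction (localize the first variation in the regular part to get $\vec H_{\si}=0$, then use monotonicity for the density bound) and the substantive reverse direction (a capacity-type excision of $\mathcal{S}=\overline{\si}\setminus\si$, where $\mathcal{H}^{n-1}$-negligibility supplies covers with $\sum_i r_i^{n-1}<\epsilon'$ and the density bound converts each factor $1/r_i$ coming from $|\nabla_{\si}\phi_\epsilon|$ into $r_i^{n-1}$) are both sound; indeed the same cutoff device is used by the paper itself in the proof of Lemma \ref{lem2}, so your argument is entirely in the spirit of the text. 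Two small points are worth tightening. First, in $(1)\Rightarrow(2)$ the \emph{uniform} constant $C_{\si}$ valid for all balls follows from monotonicity only after fixing $R$ with $\overline{\si}\subset B_{R}(p)$ and using finiteness of the total mass; for a noncompact $\si$ with merely locally finite mass the stated global bound need not follow, so you are implicitly using the compactness present in all of the paper's applications. Second, your error estimate bounds $\int_{\si}\langle\nabla_{\si}\phi_\epsilon,\zeta\rangle\,\theta\,d\mathcal{H}^n$ by $\sum_i (C/r_i)\,\mathcal{H}^n\bigl(B_{2r_i}(p_i)\cap\si\bigr)$, which silently drops $\theta$: since $\theta$ is only locally constant on $\si$ it could a priori be unbounded near $\mathcal{S}$, and the hypothesis in $(2)$ controls $\mathcal{H}^n\mres\si$, not $\|\mathcal{V}\|$. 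You should either note that $\theta$ is bounded on the compact set $\mathrm{supp}(\zeta)\cap\overline{\si}$ (true in every application here, where $\theta\equiv 1$) or phrase the estimate in terms of the Radon measure $\|\mathcal{V}\|$ and its local finiteness. Neither issue affects the way the lemma is used in the paper.
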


\subsection{Minimizing Currents with Partially Free Boundary}\label{pfb}

In this section we will use the following notation.

\begin{itemize}
\item $U \subset \R^{n + 1}$ is an open set;
\item $\mathcal{D}^k(U) = \{C^{\infty}\textrm{-} \ k\textrm{-}\textrm{forms} \ \omega ;\ \spt \ \omega \subset U\}$;
\item $\de k(U)$ denotes the dual of $\mathcal{D}^k(U)$, and its elements are called $k$-currents with support in $U$;
\item The mass of $T \in \de k(U)$ in $W$ is defined by
$$\m_W(T) := \sup \{T(\omega); \ \omega \in \mathcal{D}^k(U), \ \spt \omega \subset W, \ |\omega| \leq 1\} \leq +\infty;$$
\item The boundary of $T \in \de k(U)$ is the $(k - 1)$-current $\p T \in \de {k-1}(U)$ given by 
$$\p T(\omega) := T(d\omega),$$
where $d$ denotes the exterior derivative operator.
\end{itemize}

Consider a compact domain $W \subset \rr^{n+1}$ such that $\partial W = S\cup M$, where $S$ is a compact $C^2-$hypersurface (not necessarily connected) with boundary, $M$ is a smooth compact mean convex hypersurface with boundary, which intersects $S$ orthogonally along $\partial S$, and such that $\mathrm{int}(S)\cap\mathrm{int}(M) = \emptyset$. 

Let $\Omega\subset W$ be a compact hypersurface with boundary $\Gamma = \partial \Omega$. We assume that $\Gamma \cap \mathrm{int}(W)$ is an embedded $C^2$-submanifold of dimension $n-1$ away from a singular set $\mathcal{S}_0$ such that $\mathcal{H}^{n-1}(\mathcal{S}_0) = 0$.

Define the class $\mathfrak{C}$ of admissible currents by
\begin{eqnarray*}
\mathfrak{C} = \{T \in \de n(\rr^{n+1}); \ T \ \mbox{is integer multiplicity rectifiable},\\
\spt T \subset W \ \mbox{and is compact}, \ \mbox{and} \ \spt \bigl(\llbracket\Gamma\rrbracket - \p T\bigr) \subset S\},
\end{eqnarray*}
where $\llbracket\Gamma\rrbracket$ is the current associated to $\Gamma$ with multiplicity one. We want to minimize area in $\mathfrak{C}$, that is, we are looking
for $T \in \mathfrak{C}$ such that
\begin{equation}\label{var.prob}
\m(T) = \inf\{\m(\tilde{T}); \ \tilde{T} \in \mathfrak{C}\}.
\end{equation}

Observe that $\mathfrak{C} \neq \emptyset$ since $\llbracket\Omega\rrbracket\in \mathfrak{C}$. Hence, it follows from ~\cite[$5.1.6(1)$]{Fed}, that the variational problem \eqref{var.prob} has a solution (see also \cite{Gr1}). If $T \in \mathfrak{C}$ is a solution we have
\begin{eqnarray}
\m(T) &\leq & \m(T + X) \label{min.property},\\
\spt T &\subset & W,\\
\mu_T (S) &=& 0,
\end{eqnarray}
for any integer multiplicity current $X \in \de n(\rt)$ with compact support such that $\spt X \subset W$ and $\spt \p X \subset S$.

In order to apply the known regularity theory for $T$ we need the following result, whose proof is the same as that of the case $n=2$ (see Section 3 in \cite{LM}).

\begin{proposition}\label{max.princ}
If $T$ is a solution of \eqref{var.prob}, then either $\spt T\setminus\Gamma \subset W \setminus M$ or $\spt T \subset M$.
\end{proposition}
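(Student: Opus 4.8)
The plan is to prove the dichotomy by a strong maximum principle argument in which $M$ plays the role of a mean-convex barrier for the minimizer. Suppose the first alternative fails, so that there is a point $p \in (\spt T\setminus\Gamma)\cap M$; the goal is to show that this single contact forces $M\subset \spt T$. Since $\spt\bigl([[\Gamma]]-\partial T\bigr)\subset S$, the boundary of $T$ is supported on $\Gamma\cup S$, and because $p\notin\Gamma$ while $p\notin\mathrm{int}(S)$ (as $\mathrm{int}(S)\cap\mathrm{int}(M)=\emptyset$), the current $T$ carries no boundary in a small ball $B$ about $p$. Hence, by the minimizing property \eqref{min.property}, the integral varifold $V$ associated with $T$ is stationary in $B\cap\mathrm{int}(W)$. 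Moreover $\spt T\subset W$, so near $p$ the support of $T$ lies entirely on one closed side of $M$, precisely the side into which the mean curvature vector of the mean-convex hypersurface $M$ points.

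First I would treat an interior contact point $p\in\mathrm{int}(M)$. The configuration is then exactly that of a stationary integral varifold lying on the mean-convex side of the smooth barrier $M$ and touching it at $p$. I would invoke the strong maximum principle for stationary varifolds against a mean-convex barrier (the Solomon--White / Ilmanen maximum principle), whose \emph{varifold} form is essential here: unlike the case $n=2$, the minimizer can have a singular set, so one cannot assume $\spt T$ is smooth at the contact point, and the varifold statement applies regardless. It yields that the connected piece of $M$ through $p$ is contained in $\spt V=\spt T$. To globalize, I would run a clopen argument: the set $A=\{x\in\mathrm{int}(M): x\in\spt T\}$ is closed in $\mathrm{int}(M)$ since $\spt T$ is closed, and open by applying the local maximum principle at each of its points; hence $A$ is a whole component of $\mathrm{int}(M)$, and passing to closures gives $M\subset\spt T$.

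It remains to deal with the possibility that contact occurs only along the corner $\partial M=\partial S$, where $M$ meets $S$ orthogonally. Here I would exploit the orthogonality hypothesis via a reflection across $S$ (in the spirit of the Gr\"uter--Jost free-boundary technique): reflecting doubles $M$ across $S$ into a larger smooth mean-convex hypersurface and converts the free-boundary behaviour of $T$ on $S$ into an interior picture, so that a corner contact becomes an interior contact to which the argument of the previous paragraph applies. I expect the main obstacle to lie precisely in this boundary analysis, namely in verifying that the reflected varifold is stationary across $S$ and that the hypotheses of the varifold maximum principle persist in the presence of the (possibly nonempty) singular set of the minimizer; away from $\Gamma$ the interior and free-boundary regularity theory keeps this singular set of codimension at least seven, but the whole scheme must be carried out at the level of varifolds so as never to require $\spt T$ to be regular at the touching point.
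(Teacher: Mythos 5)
The paper does not in fact write out a proof of Proposition \ref{max.princ}: it asserts that the argument is identical to the $n=2$ case and refers to \cite[Section 3]{LM}. Your strategy --- treat $M$ as a mean-convex barrier, apply a strong maximum principle for varifolds at an interior contact point, handle contact along the corner $\partial M=\partial S$ by reflecting across $S$, and globalize by connectedness --- is that strategy, and your remark that everything must be run at the varifold level is exactly the right emphasis: the proposition has to be established \emph{before} Theorem \ref{thm.reg1} (it is what puts the free boundary of $T$ inside $S\setminus\partial S$ so that Gr\"uter's theorem \cite{Gr2} applies), so no regularity of $\spt T$ at the contact point may be assumed, in any dimension.

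Three steps are looser than they should be. First, stationarity of $V$ in the open set $B\cap\mathrm{int}(W)$ is not a sufficient hypothesis for the barrier principle: a varifold stationary in $\mathrm{int}(W)$ can touch a strictly mean-convex $M$ if it carries generalized boundary on $M$ (picture a minimal sheet ending transversally on $M$). What you must extract from \eqref{min.property} is the one-sided first-variation inequality $\delta V(X)\geq 0$ for vector fields $X$ pointing into $W$ along $M$ --- inward deformations produce admissible competitors --- and it is this constrained stationarity, combined with the absence of $\partial T$ in $B$, that the Solomon--White/Ilmanen barrier principle actually requires. Second, in the applications $S$ is a piece of $\s^n$, so ``reflection across $S$'' means the inversion $x\mapsto x/|x|^2$, which is conformal but not isometric: the doubled current is minimizing for a weighted even elliptic integrand, not for area. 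This still falls under \cite{SoWhi} (stated for even parametric elliptic functionals) and is the same device behind the free-boundary monotonicity formula, but ``converts the free-boundary behaviour into an interior picture'' elides it. Third, your clopen argument is run on $\mathrm{int}(M)$, but in the applications $\Gamma$ lies partly on $M$ (e.g.\ $\Gamma_I\subset M$ in Theorem \ref{thm-main}), and openness of your set $A$ at points of $\Gamma$ does not follow from the maximum principle since $T$ has boundary there; one needs either that $M\setminus\Gamma$ is connected or an additional continuation argument across $\Gamma$ to reach the conclusion $M\subset\spt T$ rather than merely ``some component of $M\setminus\Gamma$ lies in $\spt T$.''
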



For any given $n-$dimensional compact set $K\subset W$ we call {\it corners} the set of points of $\partial K$ which also belong to $S$. We then have the following regularity result. 

\begin{theorem}\label{thm.reg1}
Let $T$ be a solution of \eqref{var.prob}.  Then there is a set $\mathcal{S}_1 \subset \spt T$ such that, away from $\mathcal{S}_0\cup\mathcal{S}_1\cup\Gamma$, $T$ is supported in a oriented embedded minimal $C^2$-hypersurface, which meets $S$ orthogonally along $\spt \bigl(\llbracket\Gamma\rrbracket - \p T\bigr)$. Moreover
\begin{equation}\label{sing.set}
\left\{\begin{array}{rl} 
&\mathcal{S}_1 = \emptyset,\quad  \text{if}\ n \leq 6 ,\\\\ 
&\mathcal{S}_1 \ \textrm{is discrete},\quad  \text{if}\ n = 7,\\\\
&\mathcal{H}^{n-7+\delta}\big(\mathcal{S}_1\big) = 0,\ \forall\, \delta > 0,  \quad  \text{if}\ n > 7.
\end{array} \right.
\end{equation}
\end{theorem}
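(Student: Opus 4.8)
The plan is to classify the points of $\spt T\setminus(\mathcal{S}_0\cup\Gamma)$ into three types --- interior points lying in $\mathrm{int}(W)$, free boundary points lying on $\mathrm{int}(S)$, and points lying on the obstacle $M$ --- and to run the corresponding regularity theory for codimension-one area minimizers near each type. Proposition \ref{max.princ} disposes of the last type: away from $\Gamma$ one has the dichotomy $\spt T\cap M\subset\Gamma$ or $M\subset\spt T$, and in the second case the portion of the support lying on $M$ coincides with the smooth hypersurface $M$ itself; in either case the corners $\partial S=S\cap M$ require no separate treatment. The set $\mathcal{S}_1$ will then be assembled as the union of the interior and free boundary singular sets produced below, the dimensional estimates \eqref{sing.set} being stable under this finite union.

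For an interior point $p\in\spt T\cap\mathrm{int}(W)$ with $p\notin\Gamma$, I would pick $r>0$ so small that $B_r(p)\subset\mathrm{int}(W)$ and $B_r(p)\cap\Gamma=\emptyset$. Since $\mathrm{int}(W)$ is disjoint from $S$, any competitor $T+X$ with $\spt X\subset B_r(p)$ that is admissible in $\mathfrak{C}$ must satisfy $\partial X=0$, because outside $S$ one has $[[\Gamma]]-\partial(T+X)=-\partial X$ and this must be supported in $S$. Thus $T\mres B_r(p)$ is an honest interior mass minimizer, and the classical interior regularity theory for codimension-one minimizing currents (De Giorgi, Fleming, Almgren, Simons, together with Federer's dimension-reduction argument) shows that near $p$ the current $T$ is supported on an oriented embedded minimal $C^2$-hypersurface away from a singular set $\mathcal{S}_1^{\mathrm{int}}$ obeying exactly the trichotomy in \eqref{sing.set}.

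For a free boundary point $p\in\spt T\cap\mathrm{int}(S)$ with $p\notin\Gamma$ (so that $p\notin M$ and $p\notin\partial S$, since $S\cap M=\partial S$), I would choose $r>0$ with $B_r(p)\cap(M\cup\partial S\cup\Gamma)=\emptyset$, so that $T\mres B_r(p)$ minimizes mass among currents supported in $B_r(p)\cap W$ whose boundary is free to slide along $S$. This is exactly the framework of the free boundary regularity theory for codimension-one minimizers with a smooth supporting hypersurface developed by Gr\"uter and Gr\"uter--Jost. Flattening $S$ via Fermi coordinates and reflecting $T$ across $S$ produces a current that is minimizing up to the error coming from the curvature of $S$, to which an Allard-type free boundary argument together with a free boundary dimension reduction applies; this yields that near $p$ the support of $T$ is an embedded minimal $C^2$-hypersurface meeting $S$ orthogonally along $\spt([[\Gamma]]-\partial T)$, away from a free boundary singular set $\mathcal{S}_1^{\mathrm{fb}}$ satisfying the same bounds \eqref{sing.set}. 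The orthogonality is the natural boundary condition coming from the first variation of mass along vector fields tangent to $S$.

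Setting $\mathcal{S}_1=\mathcal{S}_1^{\mathrm{int}}\cup\mathcal{S}_1^{\mathrm{fb}}$ then completes the argument. I expect the free boundary case to be the main obstacle: since $S$ is curved and not totally geodesic, the reflection only yields an \emph{almost} minimizing current, so obtaining the \emph{sharp} singular-set dimension $n-7$ (rather than a weaker estimate) requires carefully running a free boundary version of Federer's dimension reduction, controlling the tangent cones at free boundary points and excluding singular minimizing cones in the low-dimensional cases. The interior estimate and the bookkeeping near $M$ and $\Gamma$ are, by contrast, direct higher-dimensional analogues of the two-dimensional argument of \cite{LM}.
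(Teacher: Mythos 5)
Your proposal is correct and follows essentially the same route as the paper: the authors likewise split $\spt T\setminus\mathcal{S}_0$ into a regular set and a singular set $\mathcal{S}_1$, invoke the classical interior regularity theory for codimension-one mass minimizers for points in $\mathrm{int}(W)$, and use Proposition \ref{max.princ} to place the free part of the boundary in $S\setminus\partial S$ so that Gr\"uter's free boundary regularity theorem \cite{Gr2} applies away from the corners. The extra detail you supply on reflection, Fermi coordinates, and free boundary dimension reduction is exactly the content of the cited result of Gr\"uter rather than a new argument.
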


\begin{proof}
 Let us write 
$$\spt T\setminus (\mathcal{S}_0) = \mathcal{R} \cup \mathcal{S}_{1},$$
where the union is disjoint and $\mathcal{R}$ consists of the points $x\in \spt T$ such that there is a neighborhood $U \subset \mathbb{R}^{n+1}$ of $x$ where $T\mres U$ is given by $m$-times ($m \in \mathbb{N}$) integration over an embedded $C^2$-hypersurface with boundary. To complete the proof we will prove the following:

\begin{itemize}
\item {\it Regularity at the interior}: $(\spt T\setminus \spt \partial T)\cap\mathcal{R} \neq \emptyset$ and $(\spt T\setminus \spt \partial T)\cap\mathcal{S}_{1}$ satisfies \eqref{sing.set};
\item {\it Regularity at the free-boundary}: $\spt \bigl(\llbracket\Gamma\rrbracket - \p T\bigr)\cap\mathcal{R} \neq \emptyset$ and $\spt \bigl(\llbracket\Gamma\rrbracket - \p T\bigr)\cap\mathcal{S}_{1}$ satisfies \eqref{sing.set};
\end{itemize}

The interior regularity is a classical result, see \cite[Section 5.3]{Fed}. Since by Proposition \ref{max.princ} the free part of the boundary is contained in $S\setminus \p S$, we can use the result by Gr\"uter \cite{Gr2} to conclude the regularity at the free boundary (away from the corners).
\end{proof}

\section{The two-piece property and other results}

\begin{definition}
Let $W$ be a region in $\ba^{n+1}$ and let $\Upsilon\subset W$ be a $(n-1)-$dimensional equatorial disk (that is, the intersection of $\ba^{n+1}$ with an $(n-1)$-dimensional plane passing through the origin). We say that $\Upsilon$ is {\it nullhomologous} in $W$ if there exists a compact hypersurface $M \subset W$ such that $\partial M = \Upsilon\cup\Gamma$, where $\Gamma$ is a $(n-1)-$dimensional compact set contained in $\s^n$ (see Figure \ref{fig-null}).
\label{def-nul}
\end{definition}

\begin{figure}[!h]
\centering
\includegraphics[scale=0.6]{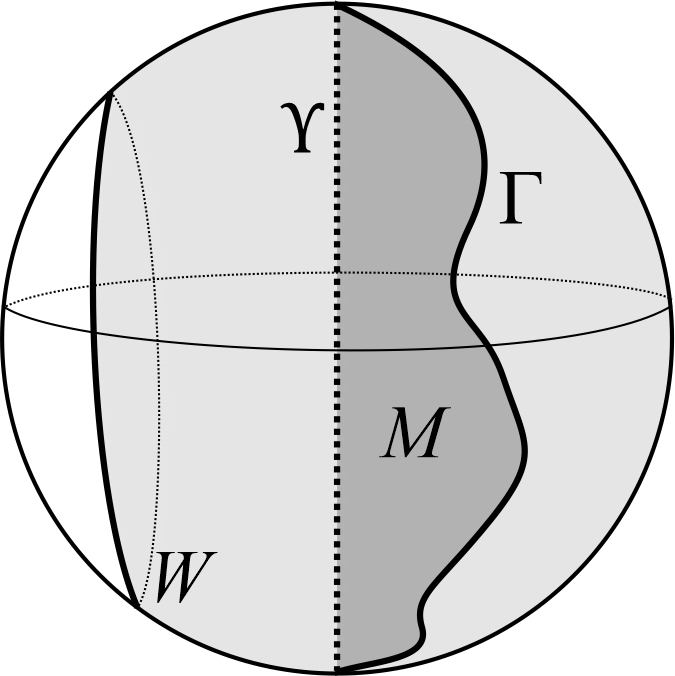}
\caption{In this region $W$, any $(n-1)-$dimensional equatorial disk $\Upsilon\subset W$ is nullhomologous.}
\label{fig-null}
\end{figure}

The boundary of the region $W$ can be written as $U\cup V$, where $\mathrm{int} (U)\subset \mathrm{int} (\ba^{n+1})$ and $V\subset \s^n.$ In the next theorem we will denote by $\p W$ the closure of the component $U,$ that is, $\p W=\overline U.$ 

\begin{theorem}
Let $W\subset \ba^{n+1}$ be a connected closed region with (not necessarily strictly) mean convex boundary such that $\partial W$ meets $\s^n$ orthogonally along its boundary and $\partial W$ is smooth. If $W$ contains a $(n-1)-$dimensional equatorial disk $\Upsilon$, and $\Upsilon$ is nullhomologous in $W$, then $W$ is a closed $(n+1)$-dimensional halfball.
\label{thm-nul}
\end{theorem}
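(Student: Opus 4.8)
The plan is to realize $\Upsilon$ as the fixed boundary of an area-minimizing current confined to $W$, and to show that such a minimizer is a flat half-disk whose rotations sweep out $W$ until they meet $\partial W$. Concretely, I would apply the framework of Subsection~\ref{pfb} to the domain $W$, taking the mean convex piece $M$ of $\partial W$ to be the interior boundary $\overline U$ and the fixed piece $S$ to be the spherical portion $V\subset\s^n$; by hypothesis these meet orthogonally along $\partial U$. The nullhomology hypothesis provides a hypersurface $M_0\subset W$ with $\partial M_0=\Upsilon\cup\Gamma'$, where $\Gamma'\subset\s^n$, so that $[[M_0]]\in\mathfrak C$ and the admissible class is nonempty; minimizing mass in $\mathfrak C$ then yields a minimizer $T$ whose fixed boundary is $\Upsilon$.

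Next I would invoke the regularity theory. By Theorem~\ref{thm.reg1}, away from $\mathcal S_0\cup\mathcal S_1\cup\Gamma$ the current $T$ is supported on an embedded minimal hypersurface $\Sigma$ meeting $\s^n$ orthogonally along its free boundary, with fixed boundary $\Gamma_I=\Upsilon$. The point of this step is to check that the singular set is small enough for Lemma~\ref{lem2}: the corners and the singularities of $\Gamma\cap\mathrm{int}(W)$ lie in $\overline{\Gamma_I}$, while $\mathcal S_1$ has Hausdorff dimension at most $n-7$, so in particular $\mathcal H^{n-2}(\mathcal S_1)=0$. Since $T$ is area-minimizing it is stable, and since $\Gamma_I=\Upsilon$ lies in an $(n-1)$-plane and hence in an equatorial $n$-disk, Lemma~\ref{lem2} applies and forces $\Sigma$ to be totally geodesic. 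Being a free boundary minimal hypersurface spanning $\Upsilon$, it must therefore be a half of an equatorial $n$-disk $D^{n,+}_{\theta_0}$ containing the $(n-1)$-plane $P$ through $\Upsilon$, with free boundary an $(n-1)$-dimensional hemisphere of the corresponding great sphere.

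The heart of the argument is then a rotation combined with the maximum principle of Proposition~\ref{max.princ}. Rotating the half-disk about $P$ produces a family $D^{n,+}_\theta$, all of which span $\Upsilon$ and, by rotational symmetry, have the same (minimal) area; hence $[[D^{n,+}_\theta]]\in\mathfrak C$ is again a minimizer whenever $D^{n,+}_\theta\subset W$. Since $\partial W=\overline U\neq\emptyset$, not all rotations can fit inside $W$, for their union covers $\ba^{n+1}\setminus P$ and would force $W=\ba^{n+1}$; so there is a first angle $\theta^\ast$ at which $D^{n,+}_{\theta^\ast}\subset W$ meets $\partial W$ at some point $q\notin\Upsilon$. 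Because $\Upsilon$ is fixed under the rotation, such a $q$ belongs to $M\setminus\Gamma$, so the alternative $\spt T\setminus\Gamma\subset W\setminus M$ of Proposition~\ref{max.princ} fails for the minimizer $[[D^{n,+}_{\theta^\ast}]]$, and we conclude $\partial W=M\subset\spt[[D^{n,+}_{\theta^\ast}]]\subset D^n_{\theta^\ast}$. Thus $\partial W$ is contained in a single totally geodesic equatorial disk; being a connected smooth hypersurface whose boundary lies on $\s^n$ and meets it orthogonally, $\partial W$ must be all of $D^n_{\theta^\ast}$, and therefore the connected region $W$ is the closed halfball bounded by $D^n_{\theta^\ast}$.

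I expect the main obstacle to be the regularity bookkeeping that makes Lemma~\ref{lem2} applicable, in particular verifying that the boundary and corner singularities of the minimizer are confined to $\overline{\Gamma_I}$ and that $\mathcal S_1$ is $\mathcal H^{n-2}$-null, together with the care needed in the rotation step to ensure that the first contact with $\partial W$ occurs away from $\Upsilon$, so that Proposition~\ref{max.princ} genuinely yields $M\subset\spt T$ rather than the vacuous alternative. The orthogonality and mean convexity of $\partial W$ enter precisely to license Proposition~\ref{max.princ}, while the totally geodesic rigidity furnished by Lemma~\ref{lem2} is what upgrades ``$\partial W$ is part of a minimizer'' to ``$\partial W$ is an equatorial disk.''
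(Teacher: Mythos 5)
Your overall strategy coincides with the paper's up to the rotation step (minimize in the partially free boundary class $\mathfrak C$, verify the singular set is $\mathcal H^{n-2}$-null so that Lemma \ref{lem2} applies, conclude the minimizer is a half equatorial disk, then rotate about $\Upsilon$), but there are two genuine gaps. The main one is in the final step: you assert that at the critical angle $\theta^\ast$ the half-disk meets $\partial W$ at some point $q\notin\Upsilon$, ``because $\Upsilon$ is fixed under the rotation.'' This does not follow. Since $\Upsilon\subset W$ may lie partly or entirely on $\partial W$ (the paper in fact \emph{arranges} $\Upsilon\cap\partial W\neq\emptyset$ at the outset), the last admissible position of the rotating half-disk can be one where the only contact with $\partial W$ is \emph{along} $\Upsilon$, i.e., a first-order tangency at boundary points of $\Sigma$. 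In that situation every contact point lies in $\Gamma$, the first alternative of Proposition \ref{max.princ} is not violated, and the proposition yields nothing. This is precisely why the paper must split the tangency point $p$ into three cases and invoke the Hopf boundary point lemma when $p\in\mathrm{int}(\Upsilon)$, Serrin's corner maximum principle when $p\in\partial\Upsilon$, and the interior/free-boundary strong maximum principle when $p\in\Sigma\setminus\Upsilon$. Your argument omits the first two cases entirely. A secondary problem in the same step: even when a contact point away from $\Gamma$ exists, the conclusion $M\subset\spt T$ would place $\partial W$ inside the closed \emph{half}-disk $\overline{D^{n,+}_{\theta^\ast}}$, which is inconsistent with your subsequent claim that $\partial W$ is the full disk $D^n_{\theta^\ast}$; the dichotomy is actually used in the paper in the form ``$\Sigma\subset\partial W$ or $\Sigma\cap\partial W\subset\Upsilon$,'' and turning a local coincidence of $\partial W$ with a hyperplane into $\partial W=D^n_{\theta^\ast}$ still requires a propagation/maximum-principle argument that you do not supply.

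The second gap is the sentence ``Since $T$ is area-minimizing it is stable.'' Stability in the sense of Definition \ref{def-stable} requires the second variation to be nonnegative for \emph{two-sided} deformations, whereas the minimizer is constrained to lie in $W$; if $\Sigma\subset\partial W$ (one of the two alternatives of Proposition \ref{max.princ}), only one-sided deformations are admissible and stability is not automatic. The paper's Claim \ref{claim1} handles this case by taking a first eigenfunction $f_1$ of the stability quotient, showing it has a sign, and using $tf_1$ as a one-sided deformation into $W$ to conclude $Q(f_1,f_1)\geq 0$ and hence $Q\geq 0$ for all admissible test functions. Without this (or without separately disposing of the case $\Sigma\subset\partial W$), the application of Lemma \ref{lem2} is not justified in that case.
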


\begin{proof}
Up to a rotation of $\Upsilon$ around the origin, we can assume that $\Upsilon\cap \p W$ is nonempty. Since $\Upsilon$ is nullhomologous in $W$, there exists a compact hypersurface $M$ contained in $W$ such that $\partial M=\Upsilon\cup\Gamma$, where $\Gamma$ is a $(n-1)-$dimensional compact set contained in $\s^n.$ We consider the class of admissible currents
\begin{eqnarray*}
\mathfrak{C} = \{T \in \de n(\rt); \ T \ \mbox{is integer multiplicity rectifiable},\\
\spt T \subset W \ \mbox{and is compact}, \ \mbox{and} \ \spt \bigl(\llbracket\partial M\rrbracket - \p T\bigr) \subset \s^n\cap W\},
\end{eqnarray*}
where $\llbracket\partial M\rrbracket$ is the current associated to $\partial M$ with multiplicity one, and we minimize area (mass) in $\mathfrak{C}.$ Then, by the results presented in Section \ref{pfb}, we get a compact embedded (orientable) partially free boundary minimal hypersurface $\Sigma\subset W$ which minimizes area among compact hypersurfaces in $W$ with boundary on the class $\Gamma=\Upsilon\cup\tilde\Gamma;$ in particular, its fixed boundary is exactly $\Upsilon.$ Moreover, by Proposition \ref{max.princ} in Section \ref{pfb}, either $\Sigma\subset\p W$ or $\Sigma\cap \p W\subset \Upsilon.$

Now the same arguments we used in the proof of Theorem 1 in \cite{LM} can be applied. In fact:
\begin{claim}
\label{claim1}
$\Sigma$ is stable.
\end{claim}

In the case $\partial W\cap\si \subset \Gamma$, $\si$ is automatically stable in the sense of Definition \ref{def-stable}, since it minimizes area for all local deformations.

Suppose $\si \subset \p W$. For any $f\in C^\infty_{c}(\Sigma)$ with $f|_{\Upsilon}\equiv0$, consider $Q(f,f)$ defined by
$$Q(f,f)=\frac{\int_\Sigma\left(|\nabla_\Sigma f|^2-|A_\Sigma|^2f^2\right)d\mathcal{H}^{n} -\int_{\Gamma}f^2d\mathcal{H}^{n-1}}{\int_{\si}f^2 d\mathcal{H}^{n}},$$ 
and let $f_1$ be a first eigenfunction, i.e., $Q(f_1,f_1) = \inf_{f} Q(f,f)$.

Observe that although differently from the classical stability quotient (we have an extra term that depends on the boundary of $\Sigma$) we can still guarantee the existence of a first eigenfunction. In fact, since for any $\delta>0$ there exists $C_\delta>0$ such that $||f||_{L^2(\partial\Sigma)} \leq \delta ||\nabla f||_{L^2(\Sigma)}+ C_\delta ||f||_{L^2(\Sigma)}$, for any $f\in W^{1,2}(\Sigma)$, we can use this inequality to prove that the infimum is finite. Once this is established the classical arguments to show the existence of a first eigenfunction work.

Since $|\nabla |f_1||=|\nabla f_1|$ a.e., we have $Q(f_1,f_1)=Q(|f_1|,|f_1|)$, that is, $|f_1|$ is also a first eigenfunction. Since $|f_1|\geq 0$, the maximum principle implies that $|f_1|>0$ in $\Sigma\setminus\partial\si$, in particular, $f_1$ does not change sign in $\Sigma\setminus\partial\si$. Then we can assume that $f_1>0$ in $\Sigma\setminus\partial\si$ and, by continuity, we get $f_1\geq0$ in $\Gamma.$ Therefore, we can use $f_1$ as a test function to our variational problem:
 Let $\zeta$ be a smooth vector field such that $\zeta(x) \in T_x\s^n,$ for all $ x \in \s^n$, $\zeta(x)\in (T_x\si)^\perp,$ for all $x \in \si$, and $\zeta$ points towards $W$ along $\si$. Let $\Phi$ be the flow of $\zeta$. For $\varepsilon$ small enough the hypersurfaces $\si_t = \{\Phi\bigl(x,tf_1\bigr)$; $x \in \si$, $0 < t < \varepsilon\}$ are contained in $W$. Since $\Sigma$ has least area among the hypersurfaces $\si_t$, we know that 
$$0 \leq \frac{d^2}{dt^2}\biggl|_{t=0^+}|\si_t| = \int_{\Sigma} (|\nabla_\Sigma f_1|^2-|A_\Sigma|^2f_1^2)\,d\mathcal{H}^{n} -\int_{\Gamma}f_1^2d\mathcal{H}^{n-1},$$
which implies that $Q(f_1,f_1)\geq0$. Since $f_1$ is a first eigenfunction, we get that $Q(f,f)\geq0$ for any $f\in C_c^\infty(\Sigma)$ with $f|_{\Upsilon}\equiv0$. Therefore, we have stability for $\Sigma$.

Then, since $\Upsilon$ is contained in an equatorial disk $D^n$, Lemma \ref{lem2} implies that $\Sigma$ is necessarily a half $n-$dimensional equatorial disk. If  $\Sigma\subset\p W$, then we already conclude that $W$ has to be a $(n+1)-$dimensional halfball.

Suppose $\Sigma\cap \p W\subset \Upsilon$. Rotate $\Sigma$ around $\Upsilon$ until the last time it remains in $W$ (this last time exists once $\Sigma\cap \p W$ is nonempty), and let us still denote this rotated hypersurface by $\Sigma$. In particular, there exists a point $p$ where $\Sigma$ and $\p W$ are tangent. We will conclude that $W$ is necessarily a $(n+1)-$dimensional halfball.

In fact, if $p\in \mbox{int}(\Upsilon),$ we can write $\p W$ locally as a graph over $\si$ around $p$ and apply the classical Hopf Lemma; if $p\in \p \Upsilon,$ we can use the Serrin's Maximum Principle at a corner (see Appendix A in \cite{LM} for the details); and if $p\in \Sigma\setminus\Upsilon$ we can apply (the interior or the free boundary version of) the maximum principle. In any case, we get that $W$ is a $(n+1)-$dimensional halfball.
\end{proof}

Now we prove the two-piece property for free boundary minimal hypersurfaces in $\ba^{n+1}$.

\begin{theorem}
Let $M$ be a compact embedded smooth free boundary minimal hypersurface in $\ba^{n+1}.$ Then for any equatorial disk $D^n$, $M\cap \ba^+$ and $M\cap \ba^-$ are connected.
\label{thm-main}
\end{theorem}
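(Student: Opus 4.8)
The plan is to argue by contradiction and to reduce the statement to the rigidity result of Theorem~\ref{thm-nul}. Suppose $D^n$ is an equatorial disk for which, say, $M\cap\ba^+$ is disconnected (the case of $M\cap\ba^-$ is identical). Since $M$ is a compact embedded orientable hypersurface with $\partial M\subset\s^n$ sitting inside the simply connected ball, it is two-sided and its fundamental class vanishes relative to $\s^n$, so it separates $\ba^{n+1}$ into two connected regions $\Omega_1,\Omega_2$ with $\Omega_1\cup\Omega_2\cup M=\ba^{n+1}$. The crucial observation is that each closed region $\overline{\Omega_i}$ is an admissible domain for Theorem~\ref{thm-nul}: its interior boundary is exactly $M$, which is smooth, has vanishing (hence non-strictly mean convex) mean curvature, and, because $M$ is free boundary, meets $\s^n$ orthogonally. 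I deliberately work with the full region $\overline{\Omega_i}$ rather than with the pieces of $\ba^+\setminus M$, precisely because the latter would carry corners along $M\cap D^n$, whereas $\overline{\Omega_i}$ has smooth boundary. Thus the only thing left to produce is an $(n-1)$-dimensional equatorial disk $\Upsilon\subset\overline{\Omega_i}$ that is nullhomologous in $\overline{\Omega_i}$.

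First I would record a preliminary reduction: every connected component of $M\cap\ba^+$ must meet $D^n$. Indeed, if some component $C$ had $\overline C\cap D^n=\emptyset$, then $\overline C$ would be a union of components of $M$ contained in the closed halfball $\overline{\ba^+}$ with boundary on $\s^n$, hence a compact free boundary minimal hypersurface with empty fixed boundary; by Proposition~\ref{prop-simple}(i) it would lie in an equatorial disk, but every equatorial disk passes through the origin and therefore meets $D^n$, a contradiction. The heart of the proof is then to convert the disconnectedness of $M\cap\ba^+$ into the desired $\Upsilon$. The trace $\Gamma:=M\cap D^n$ is a hypersurface-with-boundary in $D^n$ (with $\partial\Gamma\subset\partial D^n$) that separates $D^n$ into the pieces $D^n\cap\Omega_1$ and $D^n\cap\Omega_2$. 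I would choose an $(n-1)$-equatorial disk $\Upsilon\subset D^n$ through the origin lying in the closure of a single component of $D^n\setminus\Gamma$, on the $\Omega_i$ side, and exhibit a witnessing chain $M'\subset\overline{\Omega_i}$ with $\partial M'=\Upsilon\cup\gamma$, $\gamma\subset\s^n$, assembled from one of the two halves of $D^n$ cut off by $\Upsilon$ and capped along $\Gamma$ by a suitable portion of $M\subset\partial\Omega_i$. The disconnectedness of $M\cap\ba^+$ is exactly what lets me select $\Upsilon$ and the side $i$ so that the capping closes up inside $\overline{\Omega_i}$: heuristically, two distinct components of $M\cap\ba^+$ trap an equatorial half-disk in the region between them. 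With $\Upsilon$ in hand, Theorem~\ref{thm-nul} forces $\overline{\Omega_i}$ to be a closed halfball.

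Finally I would close the loop. Every point of $M$ lies on the common boundary of $\Omega_1$ and $\Omega_2$, so $\partial\Omega_i\cap\mathrm{int}(\ba^{n+1})=M$; if $\overline{\Omega_i}$ is a halfball this interior part is precisely its flat face, an $n$-dimensional equatorial disk. Hence $M$ coincides with an equatorial disk and is totally geodesic (this is also the content of Proposition~\ref{prop-simple}(ii) and unique continuation). But an equatorial disk meets $\ba^+$ either in the empty set, when it equals $D^n$, or in a single half-disk otherwise, in both cases a connected set; this contradicts the assumption that $M\cap\ba^+$ is disconnected, and the theorem follows.

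The step I expect to be the main obstacle is the extraction of the nullhomologous equatorial disk, that is, the homological bookkeeping that selects $\Upsilon$ and the side $i$ and verifies $\partial M'=\Upsilon\cup\gamma$ with $M'\subset\overline{\Omega_i}$, keeping $\Upsilon$ inside the region through the origin. Two technical points must be dealt with there: the possibility that $M$ and $D^n$ fail to be transverse, so that $\Gamma$ is not a smooth hypersurface of $D^n$, which I would dispose of using Proposition~\ref{prop-simple}(ii) to rule out $M$ containing an $(n-1)$-piece of the free boundary of $D^n$ unless $M=D^n$; and the need to keep the witnessing domain smooth, which is the reason for running the whole argument on $\overline{\Omega_i}$ rather than on the corner-bearing components of $\ba^+\setminus M$.
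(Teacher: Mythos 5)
Your overall strategy---reduce everything to Theorem \ref{thm-nul} applied to one of the two closed regions $\overline{\Omega_i}$ bounded by $M$---is genuinely different from the paper's, but it hinges on a step you have not justified and that I do not see how to justify: the existence of an $(n-1)$-dimensional \emph{equatorial} disk $\Upsilon$ contained in $\overline{\Omega_i}$, i.e.\ in the closure of a single component of $D^n\setminus(M\cap D^n)$. Such a $\Upsilon$ must pass through the origin and span a full great $(n-1)$-disk of $D^n$, and there is no reason the trace $M\cap D^n$ leaves room for one. For instance, if $M\cap D^n$ consisted of two disjoint closed hypersurfaces of $D^n$ each enclosing the origin (a configuration you cannot exclude a priori --- excluding such configurations is essentially what the theorem asserts), then no component of $D^n\setminus(M\cap D^n)$ would contain any $(n-1)$-equatorial disk in its closure; already for the critical catenoid the trace on the orthogonal equator is a sphere centered at the origin, and neither the inner disk nor the outer annulus contains a diameter. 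Your heuristic that ``two distinct components of $M\cap\ba^+$ trap an equatorial half-disk'' is exactly the unproved content: disconnectedness of $M\cap\ba^+$ gives you a disconnected trace in $D^n$, not a flat equatorial slice sitting inside one region. You flag this extraction as the main obstacle, and it is a genuine gap --- without it the argument does not start.

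The paper circumvents this by never asking for an equatorial $\Upsilon$ at the outset. It takes the actual fixed boundary $\Gamma_I=\partial\overline{M_1}\cap D^n$ (an arbitrary, possibly singular, hypersurface of $D^n$), solves the partially free boundary Plateau problem in $W$ with that prescribed boundary, and uses stability together with Lemma \ref{lem2} --- which only requires the fixed boundary to be \emph{contained in} an equatorial disk, not to be an equatorial $(n-1)$-disk --- to conclude that the minimizer $\Sigma$ is flat. Theorem \ref{thm-nul} enters only as an auxiliary step, to rule out a component of $\Sigma$ being bounded by an $(n-1)$-equatorial disk. The contradiction is then extracted by the Constancy Theorem (showing $\Sigma\cup\Sigma'=D^n$, hence $M\cap D^n=\Gamma_I$) and, for $n\geq 3$, by a nodal-set analysis of $x_{n+1}$ on $M$ combined with the Solomon--White strong maximum principle, which forces $\overline{M_2}$ to have empty fixed boundary, contradicting Proposition \ref{prop-simple}(i). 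To salvage your route you would have to replace ``find an equatorial $\Upsilon$ in $\overline{\Omega_i}$'' by ``minimize with the fixed boundary you actually have,'' which is precisely the paper's argument.
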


\begin{proof}
If $M$ is an equatorial disk, then the result is trivial. So let us assume this is not the case.

Suppose that, for some equatorial disk $D^n$, $M\cap \ba^+$ is a disjoint union of two nonempty open hypersurfaces $M_1$ and $M_2$, $M_1$ being connected. Notice that by Proposition \ref{prop-simple}(i) both $\overline{M_1}$ and (all components of) $\overline{M_2}$ have nonempty fixed boundary and nonempty free boundary. Let us denote by $\Gamma_I = \partial \overline{M_1}\cap D^n$ the fixed boundary of $\overline{M_1}$ which might be disconnected. If $M_1$ and $D^n$ are transverse, then $\Gamma_I$ is an embedded smooth submanifold of dimension $n-1$. If $M_1$ and $D^n$ are tangent, then the local description of nodal sets of elliptic PDE's (see for instance \cite{HarSim2}) imply that $\mathrm{int}(\Gamma_I)$ is an embedded smooth submanifold of dimension $n-1$, away from a singular set $\mathcal{S}_0$ such that $\mathcal{H}^{n-1}(\mathcal{S}_0) = 0$.


Denote by $W$ and $W'$ the closures of the two components of $\ba^{n+1}\setminus M.$ They are compact domains with mean convex boundary. Hence, we can minimize area for the following partially free boundary problem (see Section \ref{pfb}):

We consider the class of admissible currents
\begin{eqnarray*}
\mathfrak{C} = \{T \in \de n(\rt); \ T \ \mbox{is integer multiplicity rectifiable},\\
\spt T \subset W \ \mbox{and is compact}, \ \mbox{and} \ \spt \bigl(\llbracket\partial \overline{M_1}\rrbracket - \p T\bigr) \subset \s^2\cap W\},
\end{eqnarray*}
where $\llbracket\partial \overline{M_1}\rrbracket$ is the current associated to $\partial \overline{M_1}$ with multiplicity one, and we minimize area (mass) in $\mathfrak{C}.$ Then, by the results presented in Section \ref{pfb}, we get a compact embedded (orientable) partially free boundary minimal hypersurface $\Sigma\subset W$ which minimizes area among compact hypersurfaces in $W$ with the same fixed boundary as $\overline{M_1}$, which is contained in $D^n.$ Moreover, by Proposition \ref{max.princ} in Section \ref{pfb}, either $\Sigma\subset\p W$ or $\Sigma\cap \p W\subset \partial\Sigma.$

Arguing as in Claim \ref{claim1} of Theorem \ref{thm-nul}, we can prove the stability of $\Sigma$. Also, observe that by Theorem \ref{thm.reg1} the singular set $\mathcal{S}_1$ of $\si\setminus D$ is empty or satisfies $\mathcal{H}^{n-7+\delta}(\mathcal{S}_1)=0, \,\forall\, \delta > 0$, in particular $\mathcal{H}^{n-2}(\mathcal{S}_1)=0$. So, we can apply Lemma \ref{lem2} and conclude that each component of $\Sigma$ is a piece of an equatorial disk.

The case $\si \subset \p W$ can not happen because this would imply that $M$ is a disk, and we are assuming it is not. Therefore, only the second case can happen, that is, any component of $\Sigma$ meets $\p W$ only at points of $\partial\Sigma.$ Observe that each component of $\Sigma$ that is not bounded by a $(n-1)-$dimensional equatorial disk is necessarily contained in $D^n$. If some component of $\Sigma$ were bounded by a $(n-1)-$dimensional equatorial disk, then we could apply Theorem \ref{thm-nul} and would conclude that $M$ is a $n-$dimensional equatorial disk, which is not the case. Then $\Sigma$ is entirely contained in $D^n$ and, since $\Sigma\cap \p W\subset \partial\Sigma$, $M\subset \p W$ and $M\cap D^n$ does not contain any $(n-1)$-dimensional piece of $\partial D^n$ (Proposition \ref{prop-simple}(ii)), we have $\Sigma\cap M=\Gamma_I$.

Doing the same procedure as in the last paragraph for $W'$, we can construct another compact hypersurface $\Sigma'$ of $D^n$ with fixed boundary $\p \Sigma'=\Gamma_I$ and such that $\Sigma'\subset W'$ and $\Sigma'\cap M=\Gamma_I$. Notice that $\Sigma\cup \Sigma'$ is a hypersurface without fixed boundary of $D^n$, therefore $\Sigma\cup \Sigma'=D^n.$ In fact, let us denote by $T$ and $T'$ the minimizing currents associated to $\Sigma$ and $\Sigma' $ respectively, that is, $\spt T=\Sigma$ and $\spt T'=\Sigma'$. First observe that $\spt \partial (T-T')\subset \partial D^n$ and $\partial\partial (T-T')=0$; hence, by the Constancy Theorem, we know that $\partial (T-T')=k \partial D^n$, for some interger $k$.  Now, since $\spt (T-T'-kD^n)\subset D^n$ and $\partial(T-T'-kD^n)=0,$ the Constancy Theorem implies that $T-T'=kD^n$; but since $\Gamma_I$ has multiplicity one, this also holds for $T$ and $T'$ and therefore $k=1$ necessarily. Hence, $\Sigma\cup\Sigma'=\spt (T-T')=D^n$.

In particular, $M\cap D^n=\Gamma_I,$ which implies that $M_2=M\cap \ba^+\setminus M_1$ has fixed boundary contained in $\Gamma_I$. For $n=2$, since $M$ is embedded and $\Gamma_I$ has singularities of $n$-prong type (if any), we know that the fixed boundaries of $M_1$ and $M_2$ are disjoint, in particular, the fixed boundary of $\overline{M_2}$ is necessarily empty and this yields a contradiction by Proposition \ref{prop-simple}(i). It remains to analyse the case when $n\geq 3.$ 

Let us assume, without loss of generality, that $D^n= \ba\cap \{x_{n+1}=0\}$; hence, we have $M\cap D^n=\Gamma_I=\{q\in M; x_{n+1}(q)=0\}$ which is the nodal set of the Steklov eigenfunction $x_{n+1}:M\to \rr.$ 

Observe that if $q\in \Gamma_I$ and $\nabla_M x_{n+1}(q)\neq 0$ then, since $M$ is embedded, we know that in a neighborhood of $q$ we have $M\cap D^n=\overline{M_1}\cap D^n$; in particular, $q$ can not be contained in $\partial \overline{M_2}.$ 

Now let us analyse the singular set $\mathcal S=\{x_{n+1}=0\}\cap \{\nabla_M x_{n+1}=0\}\subset \Gamma_I.$ By Theorem 1.7 in \cite{HarSim2}, the Hausdorff  dimension of $\mathcal S$ is less than or equal to $n-2;$ in particular, $\mathcal H^{n-1}(\mathcal S)=0$ and therefore by Lemma \ref{lem:char.stat} $\overline{M_2}$ is stationary. By \cite{SoWhi} we can conclude that either $\overline{M_2}\cap D^n=\emptyset$ or $D^n\subset \overline{M_2}$ (which we already know is not possible). Therefore, $\overline{M_2}$ has empty fixed boundary which is a contradiction by Proposition \ref{prop-simple}(i).

Therefore, the theorem is proved.
\end{proof}

\end{document}